\documentclass[12pt]{article}
\usepackage{graphicx,subfigure}
\usepackage{amsmath,amsthm,amssymb,enumerate}
\usepackage{euscript,mathrsfs}
\usepackage{color}
\usepackage{dsfont}
\usepackage{url}
\usepackage{bm}
\usepackage{comment}
\usepackage[left=2cm,right=2cm,top=3.5cm,bottom=3.5cm]{geometry}
\usepackage{color}
\usepackage[framemethod=tikz]{mdframed}
\allowdisplaybreaks

\usepackage{esint}
\usepackage{soul}

\catcode`\@=11 \@addtoreset{equation}{section}

\catcode`\@=12

\newtheorem{Theorem}{Theorem}[section]
\newtheorem{Proposition}[Theorem]{Proposition}
\newtheorem{Lemma}[Theorem]{Lemma}
\newtheorem{Corollary}[Theorem]{Corollary}

\theoremstyle{definition}
\newtheorem{Definition}[Theorem]{Definition}

\newtheorem{Remark}[Theorem]{Remark}

\newcommand{\bTheorem}[1]{
	\begin{Theorem} \label{T#1} }
	\newcommand{\eT}{\end{Theorem}}

\newcommand{\bProposition}[1]{
	\begin{Proposition} \label{P#1}}
	\newcommand{\eP}{\end{Proposition}}

\newcommand{\bLemma}[1]{
	\begin{Lemma} \label{L#1} }
	\newcommand{\eL}{\end{Lemma}}

\newcommand{\bCorollary}[1]{
	\begin{Corollary} \label{C#1} }
	\newcommand{\eC}{\end{Corollary}}

\newcommand{\bRemark}[1]{
	\begin{Remark} \label{R#1} }
	\newcommand{\eR}{\end{Remark}}

\newcommand{\bDefinition}[1]{
	\begin{Definition} \label{D#1} }
	\newcommand{\eD}{\end{Definition}}

\newcommand{\hne}{\vc{h}_{n,\ep}}
\newcommand{\hnet}{\vc{h}_{n,\ep}(t)}
\newcommand{\bap}{\bfphi_{\rm app}^\ep}

\newcommand{\Del}{\Delta_x}

\newcommand{\au}{\vu^\ep_{\rm app}}

\newcommand{\Td}{\mathbb{T}^d}
\newcommand{\Tdd}{\mathbb{T}^3}

\newcommand{\intTd}[1]{\int_{\mathbb{T}^d} #1 \ \dx}
\newcommand{\intTdd}[1]{\int_{\mathbb{T}^3} #1 \ \dx}
\newcommand{\Ds}{\mathbb{D}_x}

\newcommand{\bfomega}{\boldsymbol{\omega}}

\newcommand{\bfphi}{\boldsymbol{\varphi}}

\newcommand{\bfPsi}{\boldsymbol{\Psi}}
\newcommand{\bfpsi}{\boldsymbol{\psi}}

\newcommand{\bFormula}[1]{
	\begin{equation} \label{#1}}
	\newcommand{\eF}{\end{equation}}

\newcommand{\Ov}[1]{\overline{#1}}

\newcommand{\Curl}{{\bf curl}_x}

\newcommand{\aleq}{\stackrel{<}{\sim}}

\newcommand{\vr}{\varrho}
\newcommand{\vre}{\vr_\ep}

\newcommand{\vue}{\vu_\ep}
\newcommand{\tvr}{\wtilde \vr}
\newcommand{\tvu}{{\wtilde u}}

\newcommand{\vu}{\bm{u}}

\newcommand{\vc}[1]{ {\mathbf #1} }
\newcommand{\Ome}{\Omega_\ep}

\newcommand{\Div}{{\rm div}_x}
\newcommand{\Grad}{\nabla_x}

\newcommand{\dx}{\,{\rm d} {x}}

\newcommand{\dt}{\,{\rm d} t }

\newcommand{\intO}[1]{\int_{\Omega} #1 \ \dx}

\newcommand{\intOe}[1]{\int_{\Omega_\ep} #1 \ \dx}

\newcommand{\D}{{\rm d}}

\newcommand{\ep}{\varepsilon}

\newcommand{\R}{\mathbb{R}}

\newcommand{\br}{ \nonumber \\ }

\def\softd{{\leavevmode\setbox1=\hbox{d}%
		\hbox to 1.05\wd1{d\kern-0.4ex{\char039}\hss}}}
\definecolor{Cgrey}{rgb}{0.85,0.85,0.85}
\definecolor{Cblue}{rgb}{0.50,0.85,0.85}
\definecolor{Cred}{rgb}{1,0,0}
\definecolor{fancy}{rgb}{0.10,0.85,0.10}
\definecolor{amaranth}{rgb}{0.9, 0.17, 0.31}

\newcommand\Cbox[2]{%
	\newbox\contentbox%
	\newbox\bkgdbox%
	\setbox\contentbox\hbox to \hsize{%
		\vtop{
			\kern\columnsep
			\hbox to \hsize{%
				\kern\columnsep%
				\advance\hsize by -2\columnsep%
				\setlength{\textwidth}{\hsize}%
				\vbox{
					\parskip=\baselineskip
					\parindent=0bp
					#2
				}%
				\kern\columnsep%
			}%
			\kern\columnsep%
		}%
	}%
	\setbox\bkgdbox\vbox{
		\color{#1}
		\hrule width  \wd\contentbox %
		height \ht\contentbox %
		depth  \dp\contentbox
		\color{black}
	}%
	\wd\bkgdbox=0bp%
	\vbox{\hbox to \hsize{\box\bkgdbox\box\contentbox}}%
	\vskip\baselineskip%
}

\mdfdefinestyle{MyFrame}{%
	linecolor=black,
	outerlinewidth=1pt,
	roundcorner=5pt,
	innertopmargin=\baselineskip,
	innerbottommargin=\baselineskip,
	innerrightmargin=10pt,
	innerleftmargin=10pt,
	backgroundcolor=white!20!white}

\newcommand{\wtilde}{\widetilde}

\allowdisplaybreaks

\begin{document}


\title{\bf ON THE IMPACT\\ OF CLUSTERS OF RIGID BALLS \\ ON THE MOTION OF A VISCOUS FLUID}

\author{Marco Bravin$^{1}$ \thanks{The work of M.B. was supported by the project "Ecuaciones en derivadas parciales motivadas por procesos de difusión y mecánica de fluidos: propiedades, asintóticas y homogeneización" (Ayuda financiada contrato Programa Gob. Cantabria -UC). The work of E.F. was partially supported by the
		Czech Sciences Foundation (GA\v CR), Grant Agreement
		24--11034S. The Institute of Mathematics of the Academy of Sciences of
		the Czech Republic is supported by RVO:67985840.
		E.F. is a member of the Ne\v cas Center for Mathematical Modelling. A.R is supported by the Grant RYC2022-036183-I funded by MICIU/AEI/10.13039/501100011033 and by ESF+. A.R, A.Z have been partially supported by the Basque Government through the BERC 2022-2025 program and by the Spanish State Research Agency through BCAM Severo Ochoa CEX2021-001142-S and through project PID2023-146764NB-I00 funded by MICIU/AEI/10.13039/501100011033 and cofunded by the European Union. A.Z. was also partially supported  by a grant of the Ministry of Research, Innovation and Digitization, CNCS - UEFISCDI, project number PN-III-P4-PCE-2021-0921, within PNCDI III.} 
	\and Eduard Feireisl$^{2}$
	\and Arnab Roy$^{3,4}$ \and Arghir Zarnescu$^{3,4,5}$
}

\date{}

\maketitle

\centerline{$^{1}$Universidad de Cantabria, E.T.S. de Ingenieros Industriales y de Telecomunicacíon}

\centerline{Departmento de Matemática Aplicada y Ciencias de la Computacíon}

\centerline{Avd. Los Castros 44, 39005 Santander, Spain}

\bigskip

\centerline{$^2$ Institute of Mathematics of the Academy of Sciences of the Czech Republic}

\centerline{\v Zitn\' a 25, CZ-115 67 Praha 1, Czech Republic}

\bigskip
\centerline{$^3$ BCAM, Basque Center for Applied Mathematics}

\centerline{Mazarredo 14, E48009 Bilbao, Bizkaia, Spain.}
\bigskip
\centerline{$^4$IKERBASQUE, Basque Foundation for Science, }

\centerline{Plaza Euskadi 5, 48009 Bilbao, Bizkaia, Spain.}
\bigskip
\centerline{$^5$``Simion Stoilow" Institute of the Romanian Academy,}

\centerline{21 Calea Grivi\c{t}ei, 010702 Bucharest, Romania.}

\medskip

\date{}

\begin{abstract}
	
We develop a new approach to the problem of the motion of a large number of rigid bodies immersed in a viscous fluid. The leading idea is the concept 
of \emph{cluster} - a collection of individual rigid objects that 
may be grouped or even connected in such a way that their 
collective impact on the bulk motion of the system is similar to that of a single body. The applications of the new approach include: 

\noindent
$\bullet$ Improving the critical value of the number of balls 
of small radius such that their cloud has no impact on the limit system represented by the incompressible Navier--Stokes equations.

\noindent
$\bullet$ 
The balls follow the fluid flow in the asymptotic limit
of vanishing radius and increasing number 
even if a gravitational force is imposed.

\end{abstract}


{\small

\noindent
{\bf 2020 Mathematics Subject Classification:} 

\medbreak
\noindent {\bf Keywords:} Fluid--structure interaction problem, dynamic homogenization, rigid balls in a viscous fluid, Navier--Stokes system


}

\section{Introduction}
\label{i}

We consider a cloud of $N$ small rigid balls $B(r, \vc{h_n})
= \{x; |x - \vc{h}_n| < r \}$, $n=1, \dots, N$ of a common radius $r$ immersed in a viscous fluid occupying a domain in $\Omega \subset \R^d$, $d=2,3$. Their positions at a given time $t \geq 0$ are determined by a family 
of affine isometries $\sigma_1(t), \dots, \sigma_N(t)$ such that:

\begin{equation}
B(r, \vc{h}_n(t)) = \sigma_n(t) \{ B(r, 0) \},\ \sigma_n(t)x = \vc{h}_n (t) + 
\mathbb{O}_n(t)x ,\ \mathbb{O}_n(t) \in SO(d),\ n = 1,\dots, N,  
\label{i1}
\end{equation}
with the associated rigid velocities
\begin{equation} \label{i2} 
\vu_{n}^S(t,x) = \frac{\D \vc{h}_n (t) }{\dt}  + \bfomega_n (t) \times (x - \vc{h}_n(t)),\ 
\bfomega_n(t) = \frac{\D }{\dt} \mathbb{O}_n \circ \mathbb{O}^{-1}_n(t),\ 
n = 1, \dots N. 
\end{equation}

The fluid is Newtonian of a constant density $\vr_F > 0$, and the time evolution of its velocity $\vu^F = \vu^F(t,x)$ and the pressure $p = p(t,x)$  is governed by the Navier--Stokes system of equations 
\begin{align} 
	\Div \vu^F &= 0, \br
\vr_F \Big( \partial_t \vu^F + \vu^F \cdot \Grad \vu^F \Big) + \Grad p &= 
\Div \mathbb{S} + \vr^F \vc{g} \label{i3}
\end{align}	
with the viscous stress 
\begin{equation} \label{i4}
\mathbb{S} = \mathbb{S}(\Ds \vu^F) = \nu \Big(\Grad \vu^F + \Grad^t \vu^F \Big),\ 
\nu > 0,\ \Ds \vu = \frac{1}{2}	\Big(\Grad \vu^F + \Grad^t \vu^F \Big),\ \nu > 0.
\end{equation} 
The symbol $\vc{g} = \vc{g}(t,x)$ denotes a given volume force imposed on the 
fluid - balls system, typically a gravitational force $\vc{g} = \Grad G$.

The equations for $\vu^F,p$ hold at any given time $t>0$ in the fluid domain $\Omega\setminus\cup_{i=1}^N B(r,h_n(t))$.
We consider the no--slip boundary conditions at the fluid--ball interface, 
\begin{equation} \label{i5}
	\vu^F|_{\partial B(r, \vc{h}_n)} = \vu^S_n,\ n = 1, \dots, N.  
\end{equation}
In addition, 
we suppose the balls have constant mass densities $0 < \vr_S^n < \Ov{\vr}$, 
$n=1, \dots, N$, and their motion is governed by the  
momentum equations, which  for $d=3$ read as: 
\begin{align} 
	m_n \frac{\D^2 }{\dt^2} \vc{h}_n(t) &= \int_{\partial B(r, \vc{h}_n(t))} \Big( 
	\mathbb{S} - p \mathbb{I} \Big) \cdot \vc{n}\, \D S_x + \int_{B(r, \vc{h}_n(t))} 
	\vr^n_S \vc{g}\ \dx,\ m_n = \vr^n_S |B(r,0)| ,\br
\mathbb{J}_n(t) \frac{\D }{\dt} \bfomega_n(t) &= \mathbb{J}_n(t) \bfomega_n(t) \times \omega_n(t) \br &+ \int_{\partial B(r, \vc{h}_n(t)) } \Big( x - \vc{h}_n(t) \Big) \times \Big( \mathbb{S} - p \mathbb{I} \Big) \vc{n}   
\D S_x + \int_{B(r, \vc{h}_n(t))} \vr^n_S (x - \vc{h}_n(t)) \times \vc{g} \dx,
\label{i6}
\end{align}  with the $3\times 3$ matrix $\mathbb{J}_n $ defined through its action on arbitrary vectors $a, b\in\R^3$

\begin{equation*}
	\mathbb{J}_n (t) \vc{a} \cdot \vc{b} = 
\int_{B(r, \vc{h}_n(t))} \vr^n_S \left[ \vc{a} \times \Big( x - \vc{h}_n(t) \Big) \right] \cdot \left[ \vc{b} \times \Big( x - \vc{h}_n(t) \Big) \right]\dx.
\end{equation*}

The system of equations \eqref{i1}--\eqref{i6} represents a strong formulation of \emph{fluid--structure interaction problem}. The two-dimensional counterpart of the system \eqref{i1}--\eqref{i6} can be found in \cite[pp.~14--15]{FeiRoyZar2023}.

\subsection{Dynamic homogenization}
\label{ip1}

Our main objective is to study the asymptotic behaviour of solutions of the  fluid--structure interaction problem \eqref{i1}--\eqref{i6} in the regime $r \to 0$, $N \to \infty$, and  possibly when $\Ov{\vr} \to \infty$. The problem can be interpreted as a dynamic 
version of the homogenization process, where the fluid domain is perforated by a family o fixed small holes, see the pioneering work of Allaire \cite{Allai4}, 
\cite{Allai3} or 
Mikeli{\v c} \cite{Mik} among many others. As we have shown in \cite{BrFeRoZa}, the two processes are close and give rise to similar results 
on condition that the density of the rigid balls $\vr^n_S$ is extremely large and becomes infinite in the asymptotic limit. Intuitively, heavy bodies do not move and produce a similar asymptotic effect as fixed holes pasted in the fluid domain. 

A more interesting and physically relevant situation arises if the densities 
$\vr^n_S$ stay bounded or not very large in the asymptotic limit $r \to 0$, $N \to \infty$.
Despite a large number of studies devoted to the motion of one or a finite number of rigid bodies, mostly up to the ``first contact'', see 
\cite{BraNec2}, \cite{BraNec1}, He and Iftimie \cite{HeIft1}, \cite{HeIft2}, 
Lacave and Takahashi \cite{LacTak}, results concerning the asymptotic limit for a large number of rigid bodies are in a short supply. To the best of our knowledge, the problem of ``dynamic homogenization'' was first addressed in  
\cite{FeiRoyZar2022}. It was shown that given the radius $r \to 0$, there is a critical number $N = N(r)$ proportional to $\log(r)$ such that the cloud of balls has no influence on the asymptotic limit that coincides with the Navier--Stokes system. In particular, this number is higher than in the corresponding homogenization limit in identified by 
Allaire \cite{Allai3} in the dimension $d=2$. What is more, 
the result still holds in the context of non-Newtonian fluids examined in 
\cite{FeiRoyZar2025}. This strongly suggests that, 
unlike in the standard (static) homogenization process, the total \emph{capacity} of the family of balls is not essential for the asymptotic limit.

The fact that the balls immersed in a fluid are allowed to follow the fluid motion, pertinent to dynamic homogenization, gives rise to two hypothetical rather complementary situations:

\begin{enumerate}
\item The balls stay far away one from another. The dynamics is therefore similar that of a single body.

\item The balls stay together, possibly collide, and form more complex objects. Still the more complex objects, remaining together, impose the same 
effect on the fluid motion as a single body.

\end{enumerate}

The approach proposed in \cite{FeiRoyZar2022} is tailored  to handle the second scenario. In addition, the fact that the balls that remain far away from collisions have less influence on the development of the system is ignored. In the present paper, we propose a completely new approach to the same problem based on the concept of \emph{clusters}. 
A cluster of rigid balls is a collection of individual rigid objects that 
may be grouped or even connected in such a way that their 
collective impact on the bulk motion of the system is similar to that of a single body. We take advantage of this new approach to improve the existing results in two directions:

\begin{itemize}
	
\item We considerably improve the critical value of the number of balls 
of radius $r$, namely $N \approx r^{- \beta}$ for some $\beta > 0$, such that their cloud has no impact on the limit system formed by the incompressible Navier--Stokes equations. It is worth noting 
the result holds also in the case $d=2$. 
In particular, the number of balls not influencing the fluid motion 
is of orders of magnitude higher than  
in the homogenization process, cf. Allaire \cite{Allai4}. 

\item We show that balls follow the fluid flow in the asymptotic limit $r \to 0$, $N \to \infty$ even if the mass density $\vr^n_S$ 
is moderately growing. To this end, we adapt the method of relative energy to compare the distance between 
the ball velocities and that of the fluid. To the best of our knowledge, this is the first result concerning the rate of convergence in the dynamic homogenization limit. 
	
\end{itemize}	

We point out that our results are unconditional and global in time. In particular, collisions of two and more rigid balls are allowed. As a matter of fact, the method of clusters is intended to handle this problem intrinsic to dynamic homogenization, where the mutual distance of rigid objects tends to zero. 

\subsection{Organization of the paper}

Our results are stated in the framework of weak solutions introduced by Judakov \cite{Juda}.
The definition of weak solutions and the main results for the fluid--structure interaction problem are given in Section \ref{m}.
In Section \ref{p}, we introduce the spatial domain considered and show how the problem can be transformed to a purely space perodic setting. Section \ref{C} is the heart of the paper. We introduce the concept of cluster and cluster distance and cluster projection, together with their basic properties. In particular, in Section \ref{F}, we show how the cluster theory can be applied the the fluid--structure interaction problem. In Section \ref{A}, we construct a suitable approximation of smooth test functions for the momentum equation. Finally, we complete the proof of our main result in Section \ref{w} where we prove convergence to a weak solutions, respectively in Section \ref{T} where we prove convergence to a strong solution, which allows to obtain rates of convergence and a characterization of the limit dynamics of the rigid balls.

\section{Main results}
\label{m}

In the section, we introduce the concept of weak solution to the fluid--structure interaction problem and state our main results.

\subsection{Geometry of the physical space}

To avoid technicalities related to the boundary conditions, we suppose the fluid containing the balls is confined in a periodic slab 
\begin{equation} \label{m1}
	\Omega = \left\{ (\vc{x}_h, x_3) \ \Big| \ \vc{x}_h    \in \mathbb{T}^{d-1},\ x_d \in (0,H) \right\} = \mathbb{T}^2 \times (0,H),
\end{equation}	
where the symbol 
\[
\Td = \left( [-L, L]|_{-L, L} \right)^d 
\]
denotes the flat torus in $\R^d$. The horizontal boundary $x_d = 0, H$ is impermeable, meaning the fluid velocity satisfies
\begin{equation} \label{m2}
\vu^F \cdot \vc{n} = u^F_d|_{x_d = 0,H} = 0, \ 
[\mathbb{S} \cdot \vc{n}]_{\rm tan}|_{x_d = 0,H} = 0.
\end{equation}	 where $[S\cdot n]_{tan}$ denotes the tangential component of $[S\cdot n]$, namely $[S\cdot n]_{tan}=n\times ([S\cdot n]\times n)$.

\subsection{Weak solutions}

We are in a position to introduce the weak formulation of the fluid--structure interaction problem: 

\begin{Definition} [\bf Weak solution] \label{Dm1}
	
The quantity $(\vr, \vu)$, together with $(\vc{h}_1, \dots, \vc{h}_N, \mathbb{O}_1, \dots, 
\mathbb{O}_N)$ is \emph{weak solution} of the fluid--structure interaction problem \eqref{i1}--\eqref{i6}, \eqref{m2} in $(0,T) \times \Omega$, 
$\Omega$ given by \eqref{m1}, with the initial data 
\begin{equation} \label{m3}
	\vr(0, \cdot) = \vr_0,\ \vr \vu(0, \cdot) = \vr_0 \vu_0,
\end{equation}
if the following holds:
\begin{itemize}
\item {\bf Regularity.}
\begin{align} 
	\vr &\in L^\infty((0,T) \times \Omega) \cap C([0,T]; L^1(\Omega)),\ 
	\vu \in L^\infty(0,T; L^2(\Omega; \R^d)) \cap 
	L^2(0,T; W^{1,2}(\Omega; \R^d)),\ \br
	\vc{h}_n &\in W^{1,\infty}(0,T; \R^d),\ \mathbb{O}_n \in W^{1,\infty}(0,T; SO(d)),\ n = 1, \dots, N.
\label{m4}
\end{align}
\item {\bf Compatibility.}
\begin{align} 
\vr(t, \cdot)|_{B(r, \vc{h}_n(t))} &= \vr^n_S,\ n = 1, \dots, N,\ 
\vr(t, \cdot) = \vr_F\ \mbox{in}\ \Omega \setminus \cup_{n=1}^N B(r, \vc{h}_n(t)),\ t \in [0,T], \br 
\vu(t, \cdot)|_{B(r, \vc{h}_n(t))} &= \vu^S_n (t, \cdot), \ n=1,\dots, N,\ \vu \cdot \vc{n} = u_d|_{x_d = 0,H} = 0,
\label{m5}	
\end{align}	
where $\vc{u}^S_n$ are given by \eqref{i2}.
\item {\bf Mass conservation.} 
\begin{equation} \label{m6}
\int_0^T \intO{ \Big[ \vr \partial_t \varphi + 
	\vr \vu \cdot \Grad \varphi \Big] } \dt = - \intO{ \vr_0 \varphi(0, \cdot) },\ \Div \vu = 0
\end{equation}
for any test function $\varphi \in C^1_c([0,T) \times \Ov{\Omega})$. 	
\item {\bf Balance of momentum.}
\begin{align} \label{m7}
\int_0^T &\intO{ \Big[ \vr \vu \cdot \partial_t \bfphi + 
\vr (\vu \otimes \vu) : \Grad \bfphi \Big] } \dt \br 
&= \int_0^T \intO{ \Big[ \mathbb{S} (\Ds \vu) : \Grad \bfphi - \vr \vc{g} \cdot \bfphi \Big] } \dt - \intO{ \vr_0  \vu_0 \cdot \bfphi(0, \cdot)}
\end{align}	
for any test function $\bfphi \in C^1_c([0,T) \times \Ov{\Omega}; \R^d)$, $\bfphi \cdot \vc{n}|_{\partial \Omega} = 0$, 
$\Div \bfphi = 0$ satisfying 
\begin{equation} \label{m8}
\Ds \bfphi (t, \cdot) = 0 \ \mbox{on some open neighbourhood of}\ 
\cup_{n=1}^N B(r, \vc{h}_n(t)) \ \mbox{in}\ \Ov{\Omega} 
\ \mbox{for any}\ t \in [0,T].
\end{equation}	
\item {\bf Energy inequality.}
\begin{align} \label{m9}
- \int_0^T \partial_t \psi &\intO{  \frac{1}{2} \vr |\vu|^2 } + 
\int_0^T \psi \intO{ \mathbb{S} (\Ds \vu) : \Grad \vu } \dt \br &\leq \psi(0) 
\intO{ \vr_0 |\vu_0|^2 } + \int_0^T \psi \intO{ \vr \vc{g} \cdot \vu } \dt  
\end{align}	
for any $\psi \in C^1_c[0,T)$, $\psi \geq 0$.
\end{itemize}

\end{Definition}	

The concept of weak solution goes back to Judakov \cite{Juda}, see also 
Galdi \cite{GAL1}, Gunzburger, Lee, and Seregin \cite{GLSE} among others. 
In particular, hypothetical contacts of two or more rigid bodies are allowed, whereas the bodies can either separate or stay together after collisions. 

As we shall see in the next section, problem \eqref{m1}--\eqref{m2} can be reformulated as purely periodic with respect to all space variables. In such a setting, the \emph{existence} of global in time weak solutions was proved in \cite{BrFeRoZa2025}.

\subsection{Main results}

Our first result establishes a bound on the number of balls 
in terms of their common radius such that the limit 
system does not feel their collective effect.

\begin{Theorem} [\bf Asymptotic limit] \label{Tm1}
	
Let $\Big(\vre, \vue, (\hne )_{n=1}^{N_\ep},\ 
	(\mathbb{O}_{n, \ep})_{n=1}^{N_\ep} \Big)_{\ep > 0}$ be a family of weak solutions to the fluid structure interaction problem \eqref{m3}--\eqref{m9}, 
with the initial data 
\begin{equation} \label{m10}
|\vc{h}_{i, \ep}(0) - \vc{h}^0_{j, \ep}(0)| \geq 2 r_\ep \ \mbox{for}\ i \ne j,\ 	
	\sqrt{\vr_{0,\ep}} \vu_{0, \ep} \to \sqrt{\vr_F} \vu_0 
	\ \mbox{weakly in}\ L^2(\Omega; \R^d).
\end{equation}		
where 
\begin{equation} \label{m11}
r = r_\ep,\ \vc{g} = (0,0, g) \in L^\infty((0,T) \times \Omega),\ 
0 < \vr^{n,\ep}_S \leq \Ov{\vr}_\ep, \ n = 1, \dots, N(\ep)\mbox{ uniformly for  }\ep \to 0. 
\end{equation}
Suppose
{
\begin{align} 
N_\ep\bigg(|\log(r_\ep)|^{\frac{4}{5}}  r_\ep^{\frac{d}{5}} &+ \Ov{\vr}_\ep^{q} |\log(r_\ep)|^{\frac{6}{11}} r^{\frac{12}{11}}_\ep \bigg)\to 0 \ \mbox{as}\ \ep \to 0 \ \textrm{ for some}\ q > \frac{6}{11}\  \mbox{if}\ d = 3, \br 
N_\ep\bigg(|\log(r_\ep)|^{\frac{4}{5}}  r_\ep^{\frac{d}{5}} &+ \Ov{\vr}_\ep^{\frac{q}{q+1}} |\log(r_\ep)|^{\frac{q}{q+1}} r^{\frac{1}{q+1}}_\ep\bigg) \to 0 \ \mbox{as}\ \ep \to 0  
\ \mbox{for some}\ q > 1\	 \mbox{if}\ d = 2.
\label{m12}
\end{align}	
}

Then there exists a subsequence (not relabelled for simplicity) such that 
\begin{align} 
	\vr_\ep &\to \vr_F \ \mbox{in}\ C([0,T]; L^\gamma(\Td, \R^d)),{\mbox{ for some } }\gamma > \frac{d}{2}, \br 
	\vue &\to \vu \ \mbox{weakly-(*) in}\ L^\infty(0,T; L^2(\Omega; \R^d)), 
	\ \mbox{weakly in} \ L^2(0,T; W^{1,2}(\Omega; R^3)), \br 
	&\quad \quad \mbox{\ and (strongly) in}\ L^2((0,T) \times \Omega; \R^d),  
\label{m13}	
\end{align}
where $(\vr_F, \vu)$ is a weak solution to the Navier--Stokes system
\begin{align} 
\Div \vu &= 0 \ \mbox{a.a. in}\ (0,T) \times \Omega, \br 
\int_0^T &\intO{ \Big[ \vr \vu \cdot \partial_t \bfphi + 
		\vr (\vu \otimes \vu) : \Grad \bfphi \Big] } \dt \br 
	&= \int_0^T \intO{ \Big[ \mathbb{S}: \Grad \bfphi - \vr \vc{g} \cdot \bfphi \Big] } \dt - \intO{ \vr_0\vu_0 \cdot \bfphi(0, \cdot)}
	\label{m14}	  		
\end{align}	 
for any $\bfphi \in C^1_c([0,T) \times \Ov{\Omega}; \R^d)$, 
$\Div \bfphi = 0$, $\bfphi \cdot \vc{n}|_{\partial \Omega} = 0$.
\end{Theorem}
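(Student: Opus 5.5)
The plan is the standard compactness scheme for dynamic homogenization, with the test-function approximation built on clusters rather than on individual balls.

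\textbf{Uniform bounds and compactness of the density.} From the energy inequality \eqref{m9}, together with $\vr_\ep \geq \min\{\vr_F, \min_n \vr_S^{n,\ep}\}$ on the fluid part and the weak convergence of the initial data \eqref{m10}, we obtain uniform bounds:
\begin{itemize}
\item $\sqrt{\vr_\ep}\vue$ is bounded in $L^\infty(0,T;L^2)$;
\item $\Ds \vue$ is bounded in $L^2$, and hence, by Korn's inequality on the slab, $\vue$ is bounded in $L^2(0,T;W^{1,2})$.
\end{itemize}
The total volume of the balls is $N_\ep r_\ep^d \to 0$ by \eqref{m12}. Since $\vr_\ep - \vr_F$ is supported on this set and is bounded by $\Ov{\vr}_\ep$, we get
\[
\|\vr_\ep-\vr_F\|_{L^\gamma}\lesssim \Ov{\vr}_\ep (N_\ep r_\ep^d)^{1/\gamma}.
\]
The $\Ov{\vr}_\ep^q$ part of \eqref{m12} is what makes this vanish for some $\gamma>d/2$. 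Uniformity in time follows from the transport equation \eqref{m6}.

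\textbf{Construction of test functions and the weak limit.} Given a smooth solenoidal $\bfphi$ with $\bfphi\cdot\vc n=0$, we must build admissible test functions $\bfphi_\ep$ satisfying \eqref{m8} with $\bfphi_\ep\to\bfphi$ in a suitable norm. This uses the cluster theory of Section \ref{C}. At each time, group the balls into clusters: maximal families whose mutual cluster distance is comparable to a threshold $\delta_\ep \gg r_\ep$. Each cluster is enclosed in a ball of radius $\lesssim$ (number of members)$\cdot\delta_\ep$, so the enclosing radii sum to at most $N_\ep\delta_\ep$. Around each enclosing ball we replace $\bfphi$ by a rigid motion (its value, or cluster projection, at the center) using a Bogovskii-type cut-off of logarithmic capacity. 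In $d=2$ the cut-off is $\log$-type, and in $d=3$ one can use the same construction to gain.

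This produces errors of order:
\begin{itemize}
\item $\|\bfphi_\ep-\bfphi\|_{W^{1,p}}$ controlled by $N_\ep\cdot(\text{cluster radius})^{d/p-1}$;
\item a time-derivative error, since clusters move with velocity bounded only in $L^2$ via the energy and $\Ov{\vr}_\ep$.
\end{itemize}
Optimizing $\delta_\ep$ and the cut-off radii against these terms, with the $L^2W^{1,2}$ bound on $\vue$ (embedded into $L^q$) used to control the terms $\vr(\vu\otimes\vu):\Grad(\bfphi_\ep-\bfphi)$ and $\vr\vu\cdot\partial_t(\bfphi_\ep-\bfphi)$, should yield exactly the exponent conditions \eqref{m12}. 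The $|\log r|^{4/5} r^{d/5}$ term comes from the spatial approximation. The $\Ov{\vr}_\ep$ term comes from the time derivative, because a cluster moving with a heavy ball may move fast.

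\textbf{Strong convergence of the velocity and passing to the limit.} We prove strong $L^2$ convergence of $\vue$ via an Aubin--Lions type argument. Compactness in space comes from $W^{1,2}$. For time, we test the momentum equation with $\bfphi_\ep$ built from mollified projections of $\vue$ and bound $\partial_t$ of $\vr_\ep\vue$ against such tests in a negative norm, uniformly up to errors vanishing as $\ep\to0$. Combined with $\vr_\ep\to\vr_F$, this gives $\vr_\ep\vue\cdot\vue\to\vr_F|\vu|^2$ weakly, and then strong convergence. We then pass to the limit in \eqref{m7} with $\bfphi_\ep$, obtaining \eqref{m14}.

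The main obstacle is the time-dependent cluster construction. Clusters merge and split, so the test function must be modified only on a controlled set of times and must remain $C^1$ in time. I would first try fixing clusters on short time intervals of length $\tau_\ep$ chosen so that no ball moves more than $\delta_\ep$ within an interval. That length is dictated by ball velocities bounded by the energy and $\Ov{\vr}_\ep$. I would then glue the intervals with a partition of unity in time, paying for it in the $\partial_t$ error.
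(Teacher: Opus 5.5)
Your outline has the right architecture: energy bounds, $\vr_\ep\to\vr_F$ from $N_\ep r_\ep^d\to0$, test functions that are rigid on groups of balls, and an Aubin--Lions argument. However, the step on which the theorem rests is never produced: an admissible test function that is regular in time, with quantified errors. Moreover, the route you sketch creates the very difficulty you then leave open.

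Discrete clusters at a threshold $\delta_\ep\gg r_\ep$, enclosed in balls of total radius $\lesssim N_\ep\delta_\ep$, change discontinuously in time. You would also have to merge enclosing balls that overlap or swallow other clusters. Freezing clusters on intervals and gluing with a temporal partition of unity is only something you ``would first try''. Each re-clustering changes $\bfphi_\ep$ by up to the enclosing radius on a set of measure up to its $d$-th power, and this must be paid for in $\int\vre\vue\cdot\partial_t\bfphi_\ep$. The number of re-clusterings is controlled only by $\delta_\ep^{-1}\int_0^T\max_n|\frac{\D}{\dt}\hne|\,\dt\lesssim\delta_\ep^{-1}r_\ep^{-1/2}$ (for $d=3$, via \eqref{w5}). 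A naive worst-case count then requires roughly $N_\ep^{5/2}\delta_\ep^{3/2}r_\ep^{-1/2}\to0$. For $\delta_\ep\gtrsim r_\ep$, \eqref{m12} does not imply this: take $\Ov{\vr}_\ep$ bounded and $N_\ep=r_\ep^{-1/2}$. So your claim that optimizing $\delta_\ep$ ``should yield exactly'' \eqref{m12} is unsupported. Those exponents come from a specific construction, not from yours.

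The missing idea is the paper's \emph{soft} cluster projection \eqref{F33}. It is a barycentre with weights $\chi_\delta(d_K[\vc h_n,\vc h_i])$ that have exponential tails, and it has three properties:
\begin{enumerate}
\item it coincides for any two centres closer than $\delta$ (Lemma \ref{LA1});
\item it lies within $CN\delta|\log\delta|$ of $\vc h_i$, by \eqref{F5c};
\item it is Lipschitz in time with $|\frac{\D}{\dt}X_{B,\delta}(\vc h_i)|\lesssim N|\log\delta|\max_n|\frac{\D}{\dt}\vc h_n|$ (Lemma \ref{LF1}), because the cluster distance depends $1$-Lipschitz on the configuration (Lemma \ref{LC1a}).
\end{enumerate}
Consequently the paper never enlarges the cut-off region and never re-clusters. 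The partition $R_n$ stays at scale $r$ around each individual ball, and only the base point of the affine approximation of the vector potential $\bfPsi$ is moved to $X_{B,4r}(\vc h_n)$. Then $\bap=\Curl\bfPsi_{\rm app}$ is automatically solenoidal, equals the constant $\bfphi(t,X_{B,4r}(\vc h_i))$ on $B(\beta r,\vc h_i)$, and is $W^{1,\alpha}$ in time, with no gluing.

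The error lives on a set of measure $\approx Nr^d$. The two terms of \eqref{m12} are precisely:
\begin{itemize}
\item $N^{5/2}|\log r|^2r^{d/2}\to0$, from \eqref{CA5} with $q=2$;
\item for $d=3$, $\Ov{\vr}_\ep N^{11/6}|\log r|\,r^2\to0$, from \eqref{CA8} with $q=6/5$ combined with \eqref{w5}.
\end{itemize}
Note that $\Ov{\vr}_\ep$ enters through the weight $\vr_S^{n,\ep}$ in $\int_{\cup_n B(r,\hne)}\vre\vue\cdot\partial_t\bap$, not because heavy balls move fast; \eqref{w5} is density-independent.

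A smaller point: Theorem \ref{Tm1} gives no positive lower bound on $\vr_S^{n,\ep}$, so the $L^2(0,T;W^{1,2})$ bound cannot rest on a lower bound for $\vre$. Korn controls only $\Grad\vue$; you also need a Poincar\'e-type inequality, such as Lemma \ref{Lp1} with the conserved momentum \eqref{p4}.
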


The result improves considerably the estimate on $N(\ep) \aleq - \log(\ep)$ obtained in \cite{FeiRoyZar2022}, and largely exceeds the 
critical number of balls in the ``static'' homogenization problem for $d=2$.

Our next result concerns the asymptotic behaviour of the trajectories of individual rigid balls in the asymptotic limit $\ep \to 0$ provided the background Navier--Stokes system admits a smooth solution.
	
\begin{Theorem} [\bf Convergence to smooth limit] \label{Tm3} 
Let $\Big(\vre, \vue, (\hne )_{n=1}^{N_\ep},\ 
(\mathbb{O}_{n, \ep})_{n=1}^{N_\ep} \Big)_{\ep > 0}$ be a family of weak solutions to the fluid structure interaction problem \eqref{m3}--\eqref{m9}, 
with the initial data 
\begin{align} 
{|\vc{h}_{i, \ep}(0) - \vc{h}_{j, \ep}(0)|} &\geq 2 r_\ep \ \mbox{for}\ i \ne j,\ \vc{h}_{n,\ep}(0) \to \vc{h}_{n, 0}\ \mbox{as}\ \ep \to 0  
\ \mbox{uniformly for}\ n = 1, \dots, N_\ep, \br	
	\intTd{ \vre |\vu_{0,\ep} - \vu_0 |^2 } &\leq r^\alpha_\ep \beta (\ep),\ 
	\beta (\ep) \to 0 \ \mbox{as}\ \ep \to 0,\ \alpha = 1 \ \mbox{if}\ d = 3,\ \alpha > 0 \ \mbox{arbitrary if}\ d = 2.
\label{m21}	
\end{align}	
In addition, suppose 
\begin{equation} \label{m11R}
	r = r_\ep,\ \vc{g} = (0,0, g) \in L^\infty((0,T) \times \Omega),\ 
	0 < \underline{\vr} \leq \vr^{n,\ep}_S \leq \Ov{\vr}_\ep, \ n = 1, \dots, N_\ep\ \mbox{uniformly for} \ \ep \to 0, 
\end{equation}
where 
\begin{align} \label{m20} 
&N_\ep\bigg( |\log(r_\ep)|^{\frac{4}{5}} r_\ep^{\frac{d-1}{5}} + \Ov{\vr}_\ep^{\frac{6}{11}}  |\log(r_\ep)|^{\frac{6}{11}} r^{\frac{9}{11}}_\ep\bigg) \to 0 \ \mbox{as}\ \ep \to 0 \ \mbox{if}\ d = 3
\br
&N_\ep \bigg(|\log(r_\ep)|^{\frac{4}{5}} r_{\ep}^{\frac{d-\delta}{5}} + \Ov{\vr}_\ep^{\frac{q}{q+1}} |\log(r_\ep)|^{\frac{q}{q+1}} r^{\frac{1}{q+1}(1-\delta)}_\ep \bigg)\to 0 \ \mbox{as}\ \ep \to 0 
  \mbox{ for some}\ q > 1, \delta>0 \ \mbox{if}\ d = 2. 
\end{align}
Finally, suppose that
the Navier--Stokes system \eqref{m14} admits a classical solution $\vu$ in $(0,T) \times \Omega$, in particular 
with $\partial_t \vu$, $\nabla^2_x \vu$ continuous in $[0,T] \times \Ov{\Omega}$.

Then 
	\begin{align} 
		\vr_\ep &\to \vr_F \ \mbox{in}\ C([0,T]; L^\gamma(\Td, \R^d)),{\textrm{ for some }}\ \gamma > \frac{d}{2}, \br 
		\vue &\to \vu \ \mbox{in}\ L^2(0,T; W^{1,2}(\Omega; \R^d)), \br 
		\sqrt{\vre} \vue &\to \sqrt{\vr_F} \vu \ \mbox{in}\ L^\infty(0,T; L^2(\Td; \R^d)), 
		\label{m22}	
	\end{align}
    
and 
\begin{align} 
\vc{h}_{n, \ep} &\to \vc{h}_n \ \mbox{in}\ W^{1,2}(0,T; \R^d) \ \mbox{as}\ \ep \to 0, \br
\mbox{where}\ \frac{\D }{\dt} \vc{h}_n(t, \cdot) &= \vu (t, \vc{h}_n(t))
\ \mbox{for any}\ n = 1,2,\dots
\label{m23}
\end{align}
\end{Theorem}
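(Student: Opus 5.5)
We follow the relative energy method. The comparison functional is
\[
\mathcal{E}_\ep(t) = \frac12 \intO{ \vre |\vue - \bfphi_\ep|^2 }(t),
\]
where $\bfphi_\ep$ is an admissible approximation of the smooth Navier--Stokes solution $\vu$. We take $\bfphi_\ep = \au$, built by the construction of Section \ref{A}. This field is solenoidal and satisfies $\bfphi_\ep \cdot \vc{n}|_{\partial\Omega} = 0$. It is rigid on a neighbourhood of every cluster, as required by \eqref{m8}, so it is a legitimate test function in \eqref{m7}. It differs from $\vu$ only in small neighbourhoods of the clusters. The first task is to quantify this. Using the cluster decomposition and the cluster projection of Section \ref{C}, together with the number of clusters and their diameters, we aim for
\[
\| \au - \vu \|_{L^\infty(0,T;L^2)} + \| \au - \vu\|_{L^2(0,T;W^{1,2})} + \|\partial_t(\au-\vu)\|_{L^{p}(L^{s})} \to 0 .
\]
The rate should be controlled by the first terms in \eqref{m20}, namely $N_\ep|\log r_\ep|^{4/5} r_\ep^{(d-1)/5}$, with $r^{(d-\delta)/5}$ when $d=2$. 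This is a loss of one power of $r$ with respect to \eqref{m12}, because now we need the time derivative and strong $H^1$ control of the corrector rather than mere convergence.

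Next we derive the relative energy inequality. Test \eqref{m7} with $\au$ and combine the result with the energy inequality \eqref{m9} and the mass equation \eqref{m6}, the last tested with $\tfrac12|\au|^2$. This gives
\[
\mathcal{E}_\ep(\tau) + \int_0^\tau\!\! \intO{ \mathbb{S}(\Ds(\vue-\au)):\Ds(\vue-\au) } \dt \le \mathcal{E}_\ep(0) + \int_0^\tau \mathcal{R}_\ep \dt .
\]
The remainder $\mathcal{R}_\ep$ contains three kinds of terms:
\begin{itemize}
\item the convective term $\intO{\vre(\vue-\au)\otimes(\vue-\au):\Grad\au}$, bounded by $\|\Grad \au\|_{L^\infty}\mathcal{E}_\ep$;
\item residual terms $\intO{\vre(\vue-\au)\cdot(\partial_t\au+\au\cdot\Grad\au - \vc{g}) - \mathbb{S}(\Ds\au):\Grad(\vue-\au)}$;
\item the density mismatch $(\vre-\vr_F)$ multiplying smooth quantities.
\end{itemize}
In the residual terms, replace $\au$ by $\vu$ and use that $\vu$ solves Navier--Stokes with $\vr_F$. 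What remains is the error of the first step plus terms supported on $\{\vre\neq\vr_F\}$, which is the union of the balls, of measure $\aleq N_\ep r_\ep^d$. The balls have density up to $\Ov{\vr}_\ep$. On this set the pressure gradient does not cancel, and gravity acts with $\vr_S^n - \vr_F$. We estimate these terms by Hölder's inequality, using $\|\vue\|_{L^q}$ from Sobolev embedding and the smallness of the set, at the cost of $\Ov{\vr}_\ep$ powers. Optimizing the exponents should give exactly the second terms of \eqref{m20}, $\Ov{\vr}_\ep^{6/11}|\log r_\ep|^{6/11} r_\ep^{9/11}$ in $d=3$. Together with the initial hypothesis \eqref{m21}, Gronwall's lemma yields $\sup_t\mathcal{E}_\ep + \|\Ds(\vue-\au)\|^2_{L^2L^2}\to0$. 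By Korn's inequality and the first step this gives \eqref{m22}. Density convergence follows as in Theorem \ref{Tm1}, since $|\{\vre\neq\vr_F\}|\to0$ and $\vre$ is bounded in a suitable $L^\gamma$ thanks to \eqref{m20}.

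For \eqref{m23}, note that on $B(r_\ep,\hne)$ we have $\vue=\vu^S_n$. Averaging over the ball gives
\[
\dot{\vc{h}}_{n,\ep} = \frac{1}{|B(r_\ep)|}\int_{B(r_\ep,\hne)}\vue \dx .
\]
Hence $|\dot{\vc h}_{n,\ep} - \vu(t,\hne)| \le \fint_{B}|\vue-\vu| + C r_\ep$. The key is to convert the global bound into a per-ball bound. Here the lower bound $\underline{\vr}$ enters: $\fint_B|\vue-\vu|^2 \le \underline{\vr}^{-1} r_\ep^{-d}\mathcal{E}_\ep$ plus the corrector error. This is too crude in $d=3$. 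Instead we use rigidity: $\vue-\vu$ restricted to the ball is rigid plus $O(r)$, and we apply a trace or Poincaré argument on an annulus together with $\|\Grad(\vue-\vu)\|_{L^2}$. The weight $r^\alpha_\ep\beta(\ep)$ in \eqref{m21} is designed precisely so that the Gronwall output beats the $r^{-\alpha}$ loss, giving $\int_0^T|\dot{\vc h}_{n,\ep}-\vu(\hne)|^2\dt\to0$. Comparing with the ODE $\dot{\vc h}_n=\vu(\vc h_n)$ via Gronwall and the Lipschitz continuity of $\vu$ yields uniform convergence of $\hne$ and then $W^{1,2}$ convergence.

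The main obstacle I expect is the first step: constructing $\au$ with a controlled time derivative when clusters merge and split. Cluster membership changes discontinuously in time, so $\partial_t\au$ may contain jumps. I would first try to smooth the cluster cut-offs in time over intervals where the cluster configuration is stable, using the $W^{1,\infty}$ bound on the ball velocities implied by $\Ov{\vr}_\ep$ and the energy. I would then absorb the transition errors into the $|\log r_\ep|^{4/5}$ factor.
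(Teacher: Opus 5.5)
Your architecture matches the paper's: relative energy against the cluster corrector $\au$ of Section \ref{A}, Gronwall down to $o(r_\ep^\alpha)$, a per-ball rigidity estimate paying $r_\ep^{-\alpha}$, then the ODE comparison. Two points fail as written.

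\textbf{The convective term.} It cannot be bounded by $\|\Grad\au\|_{L^\infty}\mathcal{E}_\ep$, because this norm is not bounded uniformly in $\ep$. On $B(2r_\ep,\hne)$ the corrector passes, over a length of order $r_\ep$, from $\vu(t,x)$ to the constant $\vu(t,X_{B,4r_\ep}(\hne))$. By \eqref{F5c} the cluster projection may sit at distance of order $N_\ep r_\ep|\log r_\ep|$ from $\hne$. A straight chain of balls with centre spacing below $4r_\ep$ has all weights equal to one and already realizes distance of order $N_\ep r_\ep$. So $|\Grad\au|$ can be of size $N_\ep$, and the pointwise bound behind \eqref{CA5} is $N_\ep^2|\log r_\ep|^2$. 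Gronwall would then produce a factor like $\exp(cN_\ep^2|\log r_\ep|^2)$, which destroys the rate $r_\ep^\alpha\beta(\ep)$.

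The remedy is the one in \eqref{T4}. By \eqref{A15}, $\Grad\au=0$ on the balls, so $\vre$ may be replaced by $\vr_F$ in this term, and one splits $\Ds\au=\Ds\vu+(\Ds\au-\Ds\vu)$. The first piece enters Gronwall with $\|\Ds\vu\|_{L^\infty}$. The second is bounded (for $d=3$) by $\|\Grad(\au-\vu)\|_{L^{3/2}}\|\vue-\au\|^2_{L^6}$, which is $o(1)\|\vue-\au\|^2_{W^{1,2}}$ by \eqref{CA5}. It is absorbed by the dissipation; the $L^2$ part is controlled by $\mathcal{E}_\ep$ since $\vre\geq\min\{\vr_F,\underline{\vr}\}$.

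\textbf{The time derivative of the corrector.} The obstacle you single out does not exist for this construction, and your remedy would not work. The corrector never uses cluster membership. It uses $X_{B,4r}(\hne(t))$, a barycentre with the smooth weight $\chi_{4r}$ evaluated at the cluster distance. The cluster distance is $1$-Lipschitz in its arguments and in the configuration (\eqref{C5}, Lemma \ref{LC1a}). Lemma \ref{LF1} therefore gives an absolutely continuous cluster projection with
$|\frac{\D}{\dt}X_{B,4r}(\hne)|\aleq N_\ep|\log r_\ep|\max_m|\frac{\D}{\dt}\vc{h}_{m,\ep}|$.
This is what produces \eqref{CA7} and \eqref{CA8}. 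Smoothing over stable intervals would need control of the number of merge/split events, which is unavailable. There is also no $L^\infty$ bound on the ball velocities. The usable bound is \eqref{w5}, $|\frac{\D}{\dt}\hne|\aleq r_\ep^{-1/2}\|\vue\|_{W^{1,2}}$ for $d=3$, which is only $L^2$ in time.

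\textbf{Where the $\Ov{\vr}_\ep$-condition comes from.} It is not the pressure, which cancels identically because $\vue-\au$ is solenoidal on all of $\Td$ and the limit equation is used with $\vr_F$. It is the term $\int_{\cup_n B(r_\ep,\hne)}(\vr_S^{n,\ep}-\vr_F)\,\partial_t\au\cdot(\vue-\au)\dx$. On the balls $\partial_t\au$ contains $\Grad\vu\cdot\frac{\D}{\dt}X_{B,4r}(\hne)$. Hence \eqref{CA8} and \eqref{w5} give
$\Ov{\vr}_\ep^2\int_0^T\|\partial_t\au\|^2_{L^{6/5}(\cup_n B(r_\ep,\hne))}\dt\aleq\Ov{\vr}_\ep^2N_\ep^{11/3}r_\ep^4|\log r_\ep|^2$.
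Requiring this to be $o(r_\ep)$ is exactly the second condition in \eqref{m20}.

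Similarly, the first condition in \eqref{m20} says $\|\au-\vu\|^2_{L^2(0,T;W^{1,2})}\aleq N_\ep^5|\log r_\ep|^4r_\ep^d=o(r_\ep^\alpha)$. The loss relative to \eqref{m12} comes from the target $o(r_\ep^\alpha)$ demanded by the per-ball step, not from time derivatives as you suggest.

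\textbf{The per-ball step.} This is fine and needs no annulus: $|\frac{\D}{\dt}\hne-\vu(t,\hne)|\aleq r_\ep^{-1/2}\|\vue-\vu\|_{W^{1,2}}+r_\ep$ for $d=3$. The paper instead compares with $\au$, which is exactly constant on each ball, and concludes with \eqref{F5c}.
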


As is well known, the limit Navier--Stokes system admits a local-in-time classical solution 
as soon as the initial data are sufficiently smooth. The solution is global if $d=2$,

\subsection{Problem of sedimentation}

We conclude this introductory part by a short discussion comparing our results with the work of 
H\" ofer and Schubert \cite{HofSchuI} and Jabin and Otto \cite{JabOtt} on the problem of sedimentation. 
For simplicity, we focus on $d=3$. We consider a cloud of periodically distributed equal balls driven by a vertical gravitational field. Specifically, we set 
\begin{align} \label{m24}
\ep = \frac{L}{2K + 1},\ \vc{h}_{2k,2\ell}(0) &= \Big[{2 k \ep   , 2\ell  \ep }   , \frac{H}{2} \Big] \in \Omega,\ r_{k, \ell} = \lambda \ep,\ 0 < \lambda < 1,\ 
k = -K , \dots, K,\ \ell = -K, \dots, K,\br 
\vu_{0} &= 0, \ \vr_{S}^{k, \ell} = \vr_S > 0,\ \vc{g} = (0,0,-1) = - \Grad x_3.
\end{align}
Thus there are $N = (2K + 1)^2 = \left( \frac{L}{\ep} \right)^2$ balls of radius 
$\lambda \ep$ periodically distributed in the domain $\Omega$.

Consider the fluid--structure interaction problem \eqref{m3}--\eqref{m9} 
with the data \eqref{m24}:

\begin{enumerate}
\item The first, very simple but crucial observation is that due to the periodicity 
of the domain $\Omega$, the solution can be obtained by a simple spatial shift
of a solution $(\tvr, \tvu)$ 
of the one body problem
on the (periodic) strip:
\[
\Omega_\ep = \left( [- \ep, \ep]|_{\left\{ - \ep , \ep \right\} } \right)^2 \times [0,H], 
\]
with a single ball immersed in the fluid centred at 
\[
\vc{h}(0) = \vc{h}_{0,0}(0) = (0,0, \frac{H}{2}),\ r = \lambda \ep,\ 
\tvu(0, \cdot) = \tvu_0 = 0.
\]
Indeed the solution $(\tvr, \tvu)$ extended periodically outside $\Omega_\ep$, 
\begin{align} 
	\vr(t, x_1 + 2 \ep k, x_2 + 2 \ep \ell, x_3) &= \tvr(t, x_1, x_2, x_3),\br 
	{\vu(t, x_1 + 2\ep k, x_2 + 2\ep \ell, x_3)} &= \tvu(t, x_1, x_2, x_3),\br
t \in (0,T),\ x \in \Omega_\ep ,\ k &= -K, \dots K,\ \ell = -K, \dots, K
\label{m25}	 
\end{align}
yields a weak solution$(\vr, \vu)$ of \eqref{m3}--\eqref{m9} in $(0,T) \times \Omega$.

\item As for the solution $(\tvr, \tvu)$, we rewrite the energy inequality 
\eqref{m9} in the form 
\begin{equation} \label{m26}
\frac{\D }{\dt} \intOe{ \left[ \frac{1}{2} \tvr |\tvu|^2 + x_3 \tvr \right]     }
+ \nu \intOe{ |\Grad \tvu|^2   } \leq 0 \ \mbox{in}\ \mathcal{D}'(0,T), 
\end{equation}	
cf. \cite{BrFeRoZa2025}. 

Next, we use a version of Poincar\' e inequality proved in Section \ref{p} below, 
\begin{equation} \label{m27}
\left\| v - \left( \intOe{\tvr} \right)^{-1} \intOe{ \tvr v } \right\|^2_{L^2(\Ome)} \leq C_P \| \Grad v \|^2_{L^2(\Ome; R^{3})} 
\end{equation}	
where the constant $C_P$ depends only on 
\[
\intOe{ \tvr } \ \mbox{and}\ \sup_{\Ome} \tvr .
\]	

As $\tvr, \tvu$ satisfy the momentum balance \eqref{m14} and $\tvu_0 = 0$, we easily deduce 
\[
\intOe{ \tvr \widetilde{u}_i (t, \cdot) } = 0 ,\ i = 1,2, {\textrm{ for all }t\in [0,T]}
\]
while $\widetilde{u}_3|_{\partial \Ome} = 0$.
Going back to \eqref{m26} we conclude 
\begin{equation} \label{m28}
	\frac{\D }{\dt} \intOe{ \left[ \frac{1}{2} \tvr |\tvu|^2 + x_3 \tvr \right]     }
	+ \Lambda \intOe{ \left[ |\Grad \tvu|^2 + |\tvu|^2 \right]   } \leq 0 \ \mbox{in}\ \mathcal{D}'(0,T)
\end{equation}	 
for some $\Lambda > 0$. Following the arguments of \cite{BrFeRoZa2025} we may infer that 
\begin{align}
\intOe{ \tvr |\tvu|^2 (t, \cdot) } &\to 0 \ \mbox{as}\ t \to \infty, \br 	
\intOe{ x_3 \tvr (t, \cdot) } &\to a \ \mbox{as}\ t \to \infty \ \Rightarrow \ 
(\vr_S - \vr_F) \int_{B(t)} x_3 \ \dx \to b \ \mbox{as}\ t \to \infty. 
\label{m29}
\end{align}{ with $a$ and $b$ some real constants.}

\item Suppose $\vr_S \ne \vr_F$. Seeing that 
\[
h^3(t) = \frac{1}{|B(\lambda \ep, \vc{h}(t))|} \int_{B(\lambda \ep, \vc{h}(t))} x_3 \dx, 
\]
we deduce from \eqref{m29} that 
\begin{equation} \label{m30}
	h^3(t) \to h_\infty \ \mbox{as}\ t \to \infty.
\end{equation}	
Finally, using the argument of \cite{FeiNec2011}, together with 
the hypothesis $\tvu_0 = 0$, 
we conclude
\begin{align} 
h^3(t) &\to r = \lambda \ep \ \mbox{as}\ t \to \infty,\ \mbox{if}\ \vr_S > \vr_F, \br 	
h^3(t) &\to H - r = H - \lambda \ep \ \mbox{as}\ t \to \infty \ \mbox{if}\ \vr_S < \vr_F.
\label{m31}
\end{align}

\end{enumerate}

Let us summarize the previous discussion. 

\begin{Theorem}[\bf Sedimentation] \label{Tm4}
Let the initial data for the fluid--structure interaction problem \eqref{m1}--\eqref{m9} be given by \eqref{m24}. 

Then there exists a weak solution $(\vr, \vu)$ enjoying the following properties:
\begin{itemize}
\item The solution $(\vr, \vu)$ is global in time, there is no collision of two or more rigid balls at a finite time. 

\item The kinetic energy of the system satisfies 
\[
\intO{ \frac{1}{2} \vr |\vu|^2 (t ,\cdot) } \to 0 
\ \mbox{as}\ t \to \infty.
\]
\item 
\[
h^3_{k, \ell} (t) = h^3_{0,0} (t) 
\ \mbox{for all}\ k = - K, \dots, K,\ \ell = -K, \dots, K.
\] 
	
\item 	
\begin{align} 
	h^3_{k, \ell}(t) &\to r = \lambda \ep \ \mbox{as}\ t \to \infty,\ \mbox{if}\ \vr_S > \vr_F, \br 	
	h^3_{k, \ell} (t) &\to H - r = H - \lambda \ep \ \mbox{as}\ t \to \infty \ \mbox{if}\ \vr_S < \vr_F,\ t > 0.
	\nonumber
\end{align}

\end{itemize}	
\end{Theorem}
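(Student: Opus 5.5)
The plan is to reduce the $N$-ball problem to a single-ball problem on a small periodic cell, as sketched in items 1–3 above, and to verify each listed property for this reduced problem. First, I will construct a global weak solution $(\tvr,\tvu)$ of the one-body problem on the periodic strip $\Omega_\ep$ with the ball centred at $(0,0,H/2)$, radius $\lambda\ep$, and $\tvu_0=0$. Existence follows from \cite{BrFeRoZa2025}, after the reformulation of Section \ref{p} that turns the slab into a purely periodic domain.

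Since there is only one ball per cell, no collision between different balls can occur. The only possible contact is of the ball with its own periodic images, which are at distance $2\ep - 2\lambda\ep>0$ horizontally because $\lambda<1$. I will exclude this by showing that the horizontal motion is trivial. The data are invariant under the reflections $x_1\mapsto -x_1$ and $x_2\mapsto -x_2$, so I will construct the solution within the class of reflection-symmetric fields (or select a symmetric one). This forces the horizontal centre coordinates to stay at $0$. Touching the walls $x_3=0,H$ is not a ball–ball collision and is admitted by the weak formulation.

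Next I will extend $(\tvr,\tvu)$ periodically as in \eqref{m25} and check that it satisfies Definition \ref{Dm1} in $\Omega$. Every admissible test function in $\Omega$ restricts cell by cell; after summing over cells, the integrals decompose into cell integrals of a test function that is periodic in $\Omega_\ep$ and rigid near the ball. Since $L$ is a multiple of $2\ep$ by the choice $\ep = L/(2K+1)$, the periodic extension is consistent with the $\Td$-periodicity of $\Omega$. The identity $h^3_{k,\ell}=h^3_{0,0}$ then holds by construction.

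For the decay of the energy I will follow \eqref{m26}--\eqref{m28}. Taking the horizontal constant fields $\bfphi=e_i$, $i=1,2$, as test functions in \eqref{m7} gives $\intOe{\tvr\tvu_i}=0$; the vertical component vanishes on the boundary. The Poincaré inequality \eqref{m27} then yields the coercive inequality \eqref{m28}. Since $x_3\tvr$ is bounded, the total energy $E(t)=\intOe{\left[\frac12\tvr|\tvu|^2+x_3\tvr\right]}$ is nonincreasing and bounded below, hence convergent, and $\int_0^\infty\|\tvu\|_{W^{1,2}}^2\,\dt<\infty$. I will upgrade this to $\intOe{\tvr|\tvu|^2}\to0$ by a standard argument: integrability of the dissipation along a sequence of times, combined with monotonicity of $E$ and control of the potential part via the transport of $\tvr$, as in \cite{BrFeRoZa2025}. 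This gives \eqref{m29} and hence convergence of $h^3$ as in \eqref{m30}.

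The \textbf{main obstacle} is the last item: identifying the limit $h_\infty$ as $\lambda\ep$ or $H-\lambda\ep$. Here I will invoke the argument of \cite{FeiNec2011}. In the $\omega$-limit set the velocity vanishes, so the momentum equation reduces to hydrostatic balance. This is incompatible with a ball of density $\vr_S\ne\vr_F$ hovering strictly inside, because the buoyancy force $(\vr_S-\vr_F)|B|g$ would be unbalanced. Hence the ball must rest on the bottom or the top wall, depending on the sign of $\vr_S-\vr_F$. To pass to the limit rigorously I will apply time shifts $t_k\to\infty$ and use compactness from the dissipation bound, then test the limit momentum equation with a field that is rigid (a vertical translation) on the ball.

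The step I expect to be most delicate is excluding the wall that is energetically unfavourable. For $\vr_S>\vr_F$, descending to the bottom lowers $\int x_3\tvr$, but the energy argument alone only shows that the limit is a stationary configuration. I would use monotonicity of $E$ together with $\tvu_0=0$: the initial energy equals the initial potential energy, so the ball cannot end at a configuration of higher potential energy. The top wall has higher potential when $\vr_S>\vr_F$, which rules it out.
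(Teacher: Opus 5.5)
Your proposal is correct and follows essentially the paper's route: periodic reduction to a single ball per cell $\Omega_\ep$, the Poincar\'e-based dissipative inequality \eqref{m28} giving decay of the kinetic energy and convergence of $h^3$, and then the Feireisl--Ne\v{c}asov\'a static-limit (buoyancy) argument combined with $\tvu_0 = 0$ and energy monotonicity to select the wall, which you spell out in more detail than the paper does. The reflection-symmetric construction is superfluous: the periodic images are rigid translates of the ball, so their centres stay at mutual distance at least $2\ep > 2\lambda\ep$ whatever the horizontal motion, and no collision can occur in any case (minor point: it is $2L = (2K+1)\,2\ep$, not $L$, that is a multiple of $2\ep$).
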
	

Let us recall that the number of rigid balls in Theorem \ref{Tm4} 
is $N = \frac{L^2}{\ep^2}$. We conjecture such a hypothesis would be optimal the framework of Theorem \ref{Tm1}. In contrast with the work of Otto and Jabin \cite{JabOtt}, the fluid domain is a bounded strip. Consequently, the velocities of the all rigid bodies go to zero uniformly with respect to $\ep$. 


\section{Transformation to a purely space periodic problem}
\label{p}

To avoid technicalities connected with the presence of a physical boundary, we adapt the approach proposed by Ebin \cite{EB}. 

First, we extend/replace the periodic strip $\Omega$ by  
\[
\mathbb{T}^d = \left( [-L,L]|_{\{ - L, L \} } \right)^{d-1} \times 
[-H, H] |_{ \{ -H, H \} }.
\]

Next, we extend the initial data to $\mathbb{T}^d$ so that 
\begin{align} 
\vr_0 (\vc{x}_h, - x_d) &= \vr_0( \vc{x}_h, x_d), \br
u_0^i (\vc{x}_h, - x_d) &= u_0^i( \vc{x}_h, x_d), \ i \leq d - 1, \br	
u^d_0 (\vc{x}_h, - x_d) &= - u^d_0 ( \vc{x}_h, x_d),	\label{p1}
\end{align}	
with a symmetric distribution of the rigid ball with respect to 
the plane $x_d = 0$. 

Similarly, the volume force $\vc{g} = (0,0, g)$ is extended as 
\begin{equation} \label{p2}
g(\vc{x}_h, - x_d) = - g(\vc{x}_h, x_d).
\end{equation}

Finally, we suppose that the weak solution 
of the fluid--structure interaction problem on the flat torus 
$\Td$ in the sense of Definition 
\ref{Dm1} belongs to the same symmetry class, 
\begin{align} 
	\vr (t, \vc{x}_h, - x_d) &= \vr(t, \vc{x}_h, x_d), \br
	u^i (t, \vc{x}_h, - x_d) &= u^i(t, \vc{x}_h, x_d), \ i \leq d -1, \br	
	u^d (t, \vc{x}_h, - x_d) &= - u^d (t, \vc{x}_h,  x_d),\ t \in (0,T).	\label{p3}
\end{align}	
Then, using the periodicity, it is straightforward to verify that the solution \eqref{p3}, when restricted to the strip $\Omega$, satisfies the required complete slip boundary conditions \eqref{m2}.

Solutions belonging to the symmetry class \eqref{p3} can be constructed exactly as in \cite{BrFeRoZa2025}. Throughout 
the remaining text, we shall therefore always tacitly assume that 
the weak solution are defined on the periodic (flat) torus 
$\mathbb{T}^d$ and belong to the symmetry class \eqref{p3}. 

\subsection{Poincar\'e inequality}

It follows from the momentum equation \eqref{m14} and the fact 
$u^d$, $g$ are an odd functions with respect to $x_d$ that 
\begin{align} 
\intTd{ \vr u^i (t, \cdot)  } &= \intTd{ \vr_0 u^i_0 },\ i \leq d - 1,\br 
\intTd{ \vr u^d (t, \cdot)  } &=  0 \ \mbox{for any}\ t \in [0,T].
\label{p4}
\end{align}	

Next, we report the following version of Poincar\' e inequality, 
see \cite[Lemma 3.1]{FL3}.

\begin{Lemma} [\bf Poincar\' e inequality] \label{Lp1}
Let $r \geq 0$ be a given function on $\mathbb{T}^d$ such that 
\begin{equation} \label{p5}
0 < M \leq \intTd{ r },\ \intTd{ r^{\gamma} } \leq K,\ 
\gamma = \frac{6}{5} \ \mbox{if}\ d = 3, \ \gamma 
\ \mbox{arbitrary finite if}\ d = 2.
\end{equation}

Then there is a constant $C_P = C_P(M,K)$ depending on $M$ and $K$ but not on a specific shape of $r$ such that 
\begin{equation} \label{p6}
\left\| v - \left( \intTd{r} \right)^{-1} \intTd{ r v } \right\|_{L^2(\mathbb{T}^d)} \leq C_P (M,K) \Big\| \Grad v \Big\|_{L^2(\Td; R^d)} 
\end{equation}
for any $v \in W^{1,2}(\Td)$. 	
\end{Lemma}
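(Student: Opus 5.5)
We argue by contradiction combined with compactness, the standard route for Poincaré inequalities with weighted means. First, a reduction: the left-hand side of \eqref{p6} vanishes on constants and is invariant under adding a constant to $v$. We may therefore normalize $v$ so that $\intTd{ r v } = 0$. The claim then reads
\[
\| v \|_{L^2(\Td)} \leq C_P \| \Grad v \|_{L^2(\Td; \R^d)}
\quad\mbox{for all}\ v \in W^{1,2}(\Td)\ \mbox{with}\ \intTd{ r v } = 0 .
\]

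\textbf{Step 1: compactness.} Suppose the claim fails. Then there are sequences $r_k \geq 0$ satisfying \eqref{p5} with fixed $M$, $K$, and $v_k \in W^{1,2}$, such that
\[
\intTd{ r_k v_k } = 0,\qquad \| v_k \|_{L^2} = 1,\qquad \| \Grad v_k \|_{L^2} \to 0 .
\]
Since $v_k$ is bounded in $W^{1,2}(\Td)$, Rellich's theorem gives a subsequence with $v_k \to v$ strongly in $L^2$. By the Sobolev embedding and interpolation, the convergence is also strong in $L^p$ for every $p < 6$ if $d=3$, and for every finite $p$ if $d=2$. Because $\Grad v = 0$, the limit $v$ is constant, and $\|v\|_{L^2} = 1$ forces $v = c$ with $|c| = |\Td|^{-1/2} \neq 0$.

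In parallel, $r_k$ is bounded in $L^\gamma$, so $r_k \to r$ weakly in $L^\gamma$. Testing against the constant function $1$, which lies in $L^{\gamma'}$ on the compact torus, gives $\intTd{ r } \geq M > 0$.

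\textbf{Step 2: passing to the limit in the constraint.} This is the main obstacle. Weak convergence of $r_k$ must be paired with strong convergence of $v_k$ in the dual space $L^{\gamma'}$. For $d=3$ with $\gamma = 6/5$ we have $\gamma' = 6$, which is exactly the endpoint: $W^{1,2} \hookrightarrow L^6$ is continuous but not compact, so strong convergence in $L^6$ is not automatic. I would resolve this using the vanishing gradient. By the standard (unweighted) Poincaré–Sobolev inequality,
\[
\| v_k - \overline{v_k} \|_{L^6} \leq C \| \Grad v_k \|_{L^2} \to 0 ,
\]
where $\overline{v_k}$ denotes the ordinary mean of $v_k$. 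The means $\overline{v_k}$ converge to $c$, so $v_k \to c$ strongly in $L^6$. For $d=2$ the embedding into $L^{\gamma'}$ is compact for any finite $\gamma$, so no such care is needed.

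\textbf{Step 3: contradiction.} With $v_k \to c$ strongly in $L^{\gamma'}$ and $r_k \rightharpoonup r$ weakly in $L^\gamma$, we can pass to the limit in the constraint:
\[
0 = \intTd{ r_k v_k } \to c \intTd{ r } .
\]
Since $c \neq 0$ and $\intTd{ r } \geq M > 0$, this is a contradiction. Hence $C_P(M,K)$ exists.

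It depends only on $M$ and $K$, because the contradiction argument runs over the entire class of admissible weights $r$ at once, not over a single fixed shape.
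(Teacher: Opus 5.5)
Your proof is correct. The paper gives no proof of its own here; it imports the inequality from \cite[Lemma 3.1]{FL3}. Your contradiction--compactness argument is the standard proof of such weighted Poincar\'e inequalities, and you handle the endpoint case $\gamma=\frac{6}{5}$, $\gamma'=6$ in $d=3$ correctly through $\|v_k-\overline{v_k}\|_{L^6(\mathbb{T}^3)}\lesssim\|\Grad v_k\|_{L^2}\to 0$. You do not even need the weak limit of $r_k$, since $|c|\,M\leq\bigl|c\int_{\mathbb{T}^d} r_k\dx\bigr|=\bigl|\int_{\mathbb{T}^d} r_k(v_k-c)\dx\bigr|\leq K^{1/\gamma}\|v_k-c\|_{L^{\gamma'}}\to 0$.

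There is one caveat for $d=2$. There ``$\gamma$ arbitrary finite'' must be read as $\gamma>1$, which is how the paper applies the lemma. Your step using $W^{1,2}\hookrightarrow L^{\gamma'}$ needs $\gamma'<\infty$. For $\gamma=1$ the inequality is in fact false: functions with arbitrarily small Dirichlet energy can vanish on a tiny disc carrying all the mass of $r$ while staying $L^2$-close to a nonzero constant.
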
	

From now on, and without loss of generality, we shall always assume that the initial momentum has zero mean, 
\begin{equation} \label{p7}
\intTd{ \vr_0 \vu_0 } = 0.
\end{equation}	
Consequently, it follows from \eqref{p4}, \eqref{p5}
\begin{equation} \label{p8}
\| \vu (t, \cdot) \|_{W^{1,2}(\Td; \R^d)}^2 \leq \nu 2 C_P(M,K) 
\intTd{ \mathbb{S}(\Ds \vu): \Grad \vu (t, \cdot) }
\end{equation}
for any weak solution of the fluid--structure interaction problem as 
long as the associated density $\vr_0$, 
and consequently $\vr(t, \cdot)$ for any $t \in [0,T]$, satisfy \eqref{p5}.

\begin{Remark} \label{Rp1}
In view of the Galilean invariance of the problem, the restriction 
\eqref{p7} is not essential. Indeed as the initial momentum mean 
\[
\vc{M} = \intTd{ \vr_0 \vu_0 } 
\]	
is a conserved quantity, the desired solution with $\vc{M} \ne 0$ can be recovered by a simple transformation 
\[
\vr \approx \vr (t, x - t \vc{M}) ,\ 
\vr \vu = \vc{M} + \vr \vu (t, x - t \vc{M}).
\]

\end{Remark}	 

\subsection{Energy inequality revisited} 

To begin, let us check the validity of \eqref{p5}, \eqref{p6} 
in the context of Theorems \ref{Tm1}, \ref{Tm3}. As we shall show in Section \ref{w}, hypothesis \eqref{p5} is satisfied 
with $\gamma = \frac{3}{2}>\frac{6}{5}$ if $d=3$, $\gamma > 1$ if $d=2$. Indeed,
as the density satisfies the transport equation \eqref{m6}, both 
$\| \vr \|_{L^1(\Td)}$ and $\| \vr \|_{L^{\gamma}(\Td)}$ are constants of motion, and it is therefore enough to verify \eqref{p5} only for the initial data.
Consequently, under the hypothesis \eqref{p7}, we can rewrite the energy inequality \eqref{m9} in the form 
\begin{align} \label{p10}
	- \int_0^T \partial_t \psi &\intTd{  \frac{1}{2} \vr |\vu|^2 } + 
	\Lambda \int_0^T \psi \intTd{ \Big( | \Grad \vu |^2 + |\vu|^2 \Big) } \dt \br &\leq \psi(0) 
	\intTd{ \vr_0 |\vu_0|^2 } + \int_0^T \psi \intTd{ \vr \vc{g} \cdot \vu } \dt,\  \ \mbox{for some}\ \Lambda > 0,  
\end{align}	
for any $\psi \in C^1_c([0,T)$, $\psi \geq 0$.

\section{Clusters and cluster distance}
\label{C}

We recall that our physical domain is the flat torus 
$\mathbb{T}^d$. To simplify notation we set $H= L$. 
The distance on $\mathbb{T}^d$ is therefore given by 
\[
{\rm dist}_{\mathbb{T}^3}(x,y) = \left( \sum_{i=1}^d 
\left[ \min \Big\{2L -  |x_i - y_i|, |x_i - y_i|  \Big\} \right]^2 \right)^{\frac{1}{2}}
\]
If no confusion arises, we will use the standard Euclidean notation
${\rm dist}_{\mathbb{T}^d}(x,y) = |x - y|$.

\subsection{Cluster decomposition of a compact set}

Let $K \subset X$ be a compact subset of a complete metric space $X$ with metric $d$. We denote 
\[
B(r, h) = \left\{ x \in X \ \Big| \ d(x,h) < r \right\}, 
\]
and 
\[
\mathcal{U}[K, \delta] = \cup_{x \in K} B(\delta, x)
\]
and open $\delta-$neighbourhood of ${K}$.

\begin{Definition}[\bf Cluster decomposition of compact] \label{DC1}
Let 
\[
\mathcal{U}[K, \delta] = \cup_{\lambda \in \mathcal{K}} \mathcal{O}_{\lambda}(\delta)
\]
be a decomposition of the set $\mathcal{U}[K, \delta]$ into disjoint connected components. The family 
\[
\mathcal{C}_\lambda (\delta) = K \cap \mathcal{O}_{\lambda}(\delta) ,\ \lambda \in \mathcal{K} 
\]
is called \emph{cluster decomposition} of $K$ of size $\delta$.
	
\end{Definition}	

\subsection{Cluster distance}

It follows that
\begin{align} \label{C1}
x_1 , x_2 \in \mathcal{O}_\lambda (\delta) \ 
\Leftrightarrow \ &\mbox{there is a continuous curve}\ 
\gamma: [0,1] \to X \ \mbox{such that} \br 
& \gamma(0) = x_1,\ \gamma(1) = x_2,\ {\rm dist}[\gamma(t); 
K] < \delta \ \mbox{for all}\ t \in [0,1].
\end{align}	
This motivates the following definition of cluster distance. 

\begin{Definition}[\bf Cluster distance] \label{DC2}
For $x_1, x_2 \in X$, the \emph{cluster distance} with respect 
to a compact $K$ is defined as  
\begin{align} \label{C1a}
d_{K}[x_1, x_2] = \inf &\left\{ \delta > 0 \ \Big| 
\ \mbox{there is a continuous curve}\ \gamma: [0,1] \to X, \right. \br &\quad \gamma(0) = x_1,\ \gamma(1) = x_2,\ {\rm dist}[\gamma(t); 
K] < \delta \ \mbox{for all}\ t \in [0,1] \Big\}. 
\end{align}	
\end{Definition}

Note  that 
\[
d_{K}[x_1, x_2]	\geq 
\max \Big\{ {\rm dist}[x_1; K], {\rm dist} [x_2; K] \Big\}.
\]

The following properties are easy to check:
\begin{enumerate}
\item {\bf Pseudo-metric property.}
\begin{equation} \label{C2}
d_{K}[x_1, x_2] \geq 0,\ 	
d_{K}[x_1, x_2] = 0 \ \Leftrightarrow \ 
x_1, x_2 \ \mbox{belong to the same connected component of}\ K. 
\end{equation} 
\item {\bf Symmetry.}	
\begin{equation} \label{C3}
d_{K}[x_1, x_2] = d_{K}[x_2, x_1]
\end{equation}
\item {\bf Triangle inequality.} 
\begin{equation} \label{C4}
d_{K}[x, z] \leq d_{K}[x, y] + 
d_{K}[y, z].
\end{equation}	
\item {\bf Lipschitz continuity.}
\begin{equation} \label{C5}
\Big| 	d_{K}[x, z] - d_{K}[y, z] \Big| 
\leq |x - y|.
\end{equation}	

\end{enumerate}	

 Next, we examine continuity of the cluster distance with respect to the compact set $K$. We consider the Hausdorff metric on the 
set of all compact subsets of $X$, 
\[
d_{\rm Hausdorff}[K_1, K_2] = 
\max \left\{ \sup_{y \in K_1} {\rm dist}[y, K_2]; 
\sup_{z \in K_2} {\rm dist}[z, K_1] \right\}. 
\]
From this, we deduce
\[
d_{\rm Hausdorff}[K_1, K_2] \leq \ep \ \Leftrightarrow\ 
K_1 \subset \mathcal{U}[K_2, \ep] \ \mbox{and}\ 
K_2 \subset \mathcal{U}[K_1, \ep].
\]
Consequently, 
\[
d_{K_1}[x_1,x_2] \leq d_{K_2}[x_1,x_2] + d_{\rm Hausdorff}[K_1,K_2],
\]
and, similarly, 
\[
d_{K_2}[x_1,x_2] \leq d_{K_1}[x_1,x_2] + d_{\rm Hausdorff}[K_1,K_2];
\] 
whence
\begin{equation} \label{C6}
\Big| d_{K_1} [x,y] - d_{K_2} [x,y] \Big| 
\leq d_{\rm Hausdorff}[K_1, K_2].
\end{equation}	

Let us summarize the above observation concerning the dependence of $d_K[x,y]$ on the set $K$.

\begin{Lemma}[\bf Dependence on the set $K$] \label{LC1a}
\bigskip	
\begin{itemize}
	\item Let $\vc{h}: K \to X$ be continuous. Then the mapping
\[
\vc{h} \in C(K; X) \mapsto d_{K + \vc{h}}[x,y] 
\]
is Lipschitz continuous with the Lipschitz constant $1$, specifically, 
\begin{equation} \label{C66}
\left| d_{K + \vc{h}_1}[x,y] - d_{K + \vc{h}_2}[x,y] \right| \leq \max_{z \in K} {\rm dist}_X [ \vc{h}_1(z), \vc{h}_2(z)]. 
\end{equation}

\item Suppose $X$ is a Banach space and $\vc{h}(t,z) : [0,T] \times K \to X$, $\vc{h}(\cdot, z) \in W^{1,\alpha}(0,T; X)$, $0 < \alpha \leq 1$,  meaning
\[
\vc{h}(t_1, z) - \vc{h}(t_2, z) = \int_{t_1}^{t_2} \vc{h}'(t,z) \ \dt,\ \| \vc{h}'( \cdot ,z) \|_X \in L^\alpha(0,T) \ \mbox{for any}\ z \in K.
\]
Then
\begin{equation} \label{C67}
\left| d_{K + \vc{h}(t_2, \cdot)}[x,y] - d_{K + \vc{h}(t_1, \cdot)}[x,y] \right| 
\leq \sup_{z \in K} \int_{t_1}^{t_2} \| \vc{h}' (t,z) \|_{X} \dt.
\end{equation}
In particular, if $\sup_{z \in K} \| \vc{h}'( \cdot ,z) \|_X \in L^\alpha(0,T) $, then
\begin{equation} \label{C68}	
t \mapsto d_{K + \vc{h}(t, \cdot)}[x,y] \in W^{1,\alpha}(0,T),\ \mbox{and}\ 
\left| \frac{\D }{\dt}  d_{K + \vc{h}(t, \cdot)} \right|  \leq  \sup_{z \in K} \| \vc{h}' (t,z) \|_{X}\ \mbox{for a.a.}\ t \in (0,T).
\end{equation}
\end{itemize}		
	
\end{Lemma}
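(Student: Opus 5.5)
The plan is to reduce both items of Lemma \ref{LC1a} to the Hausdorff continuity estimate \eqref{C6} established just before the statement.

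For the first item, write $K_i = \{ z + \vc{h}_i(z) \mid z \in K\}$, $i=1,2$. Here $K + \vc{h}$ is understood as the image of $K$ under $z \mapsto z + \vc{h}(z)$ (or, more generally, under $z \mapsto \vc{h}(z)$ in $X$). Since $\vc{h}_i$ is continuous and $K$ is compact, each $K_i$ is compact, so $d_{K_i}$ is well defined and \eqref{C6} applies.

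The main step is the bound
\[
d_{\rm Hausdorff}[K_1,K_2] \leq \max_{z \in K} {\rm dist}_X[\vc{h}_1(z), \vc{h}_2(z)].
\]
It follows from the pairing: for any point $y = z + \vc{h}_1(z) \in K_1$, the point $z + \vc{h}_2(z) \in K_2$ lies at distance at most ${\rm dist}_X[\vc{h}_1(z),\vc{h}_2(z)]$, and the same holds with the roles exchanged. This uses translation invariance of the metric, which is true in the torus and Banach space settings used in the paper. In an abstract metric space I read $K + \vc{h}$ as $\vc{h}(K)$, and then the bound is immediate. The maximum is attained by compactness. Combining the bound with \eqref{C6} yields \eqref{C66}.

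For the second item, apply \eqref{C66} with $\vc{h}_1 = \vc{h}(t_2,\cdot)$ and $\vc{h}_2 = \vc{h}(t_1,\cdot)$. For each $z$,
\[
\|\vc{h}(t_2,z) - \vc{h}(t_1,z)\|_X = \left\| \int_{t_1}^{t_2} \vc{h}'(t,z)\,\dt \right\|_X \leq \int_{t_1}^{t_2} \|\vc{h}'(t,z)\|_X \dt .
\]
Taking the supremum over $z$ gives \eqref{C67}.

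One technical point needs care: $d_{K+\vc{h}(t,\cdot)}$ requires $\vc{h}(t,\cdot)$ to be continuous in $z$, so that $K + \vc{h}(t,\cdot)$ is compact. I would state this as a tacit assumption, which holds in the application with finitely many balls. Alternatively, one can note that \eqref{C6} only uses the inclusions into $\delta$-neighbourhoods and therefore extends to closures.

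For \eqref{C68}, set $m(t) = \sup_{z} \|\vc{h}'(t,z)\|_X$, which lies in $L^\alpha(0,T)$ by assumption. Then \eqref{C67} gives
\[
|f(t_2) - f(t_1)| \leq \int_{t_1}^{t_2} m\,\dt, \qquad f(t) = d_{K+\vc{h}(t,\cdot)}[x,y],
\]
so $f$ is absolutely continuous. This is meaningful when $\alpha \geq 1$, so I read $W^{1,\alpha}$ with $\alpha \geq 1$, or equivalently as integrability of $m$. An absolutely continuous $f$ is differentiable a.e. At a Lebesgue point $t$ of $m$, dividing by $t_2 - t_1$ and letting $t_2 \to t_1 = t$ gives $|f'(t)| \leq m(t)$. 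Hence $f' \in L^\alpha$, and therefore $f \in W^{1,\alpha}$.

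The main obstacle is measure-theoretic rather than geometric. If $\vc{h}'(t,z)$ is not jointly measurable, $m$ may fail to be measurable, and then we must work with its measurable majorant, namely the assumed $L^\alpha$ function. The geometric content is entirely contained in \eqref{C6}.
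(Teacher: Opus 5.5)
Your proposal is correct and follows the paper's proof. You bound $d_{\rm Hausdorff}[K+\vc{h}_1,K+\vc{h}_2]$ by $\max_{z\in K}{\rm dist}_X[\vc{h}_1(z),\vc{h}_2(z)]$ and invoke \eqref{C6}, then get \eqref{C67} from \eqref{C66} and \eqref{C68} from absolute continuity; your side remarks (compactness of $K+\vc{h}(t,\cdot)$, and that the last step needs $\sup_z\|\vc{h}'(\cdot,z)\|_X$ integrable, i.e.\ $\alpha\geq 1$ rather than the stated $0<\alpha\leq 1$) are accurate details the paper leaves implicit.
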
	

\begin{proof}

Estimate \eqref{C66} follows from \eqref{C6} as soon as we observe that 
\[
d_{\rm Hausdorff}[K + \vc{h}_1, K + \vc{h}_2] \leq \max_{z \in K} {\rm dist}_X [ \vc{h}_1(z), \vc{h}_2(z)].
\]
Inequality \eqref{C67} is a direct consequence of \eqref{C66}. Finally, if $\sup_{z \in K} \| \vc{h}'( \cdot ,z) \|_X \in L^\alpha(0,T)$, then 
the function $t \mapsto d_{K + \vc{h}(t, \cdot)}$ is absolutely continuous and \eqref{C68} follows from \eqref{C67}.

\end{proof}

Finally, we need the following estimate of  the standard distance in terms of the cluster distance. 

\begin{Lemma}{\bf Bound on cluster distance} \label{LC1}

	Let $K=\cup_{n=1}^N K_n$, where 
$(K_n)_{n=1}^N$ are mutually disjoint compact sets. Then, for all $x,y\in X$, we have 
\begin{equation} \label{C8}
|x - y| \leq   \sum\limits_{n=1}^{N} \left(  {\rm diam} (K_n) + 
2 d_K[ x,y ] \right).
\end{equation}	
\end{Lemma}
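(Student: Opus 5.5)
Set $\delta_0 = d_K[x,y]$ and fix $\delta > \delta_0$. By Definition \ref{DC2} there is a continuous curve $\gamma:[0,1]\to X$ with $\gamma(0)=x$, $\gamma(1)=y$ and ${\rm dist}[\gamma(t);K]<\delta$ for all $t$. The plan is to walk along $\gamma$ and record which compact $K_n$ it is currently close to. We will build a chain of points whose consecutive gaps are at most $2\delta$ or at most ${\rm diam}(K_n)$, with each $K_n$ used at most once. Then let $\delta \downarrow \delta_0$. Since the right-hand side of \eqref{C8} is $\sum_n {\rm diam}(K_n) + 2N d_K[x,y]$, we may spend up to $N$ "jumps" of length $2\delta$ and each diameter once.

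\textbf{Step 1: covering the curve.} For $t\in[0,1]$ pick $n(t)$ and $k(t)\in K_{n(t)}$ with $|\gamma(t)-k(t)|<\delta$; this is possible since $K$ is a finite union of compacts. Let $I_n=\{t: {\rm dist}[\gamma(t);K_n]<\delta\}$. Each $I_n$ is open in $[0,1]$ and together they cover $[0,1]$. Discard the $I_n$ that are empty.

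\textbf{Step 2: greedy chain.} Start at $t_0=0$ and choose $n_1$ with $0\in I_{n_1}$. Let $s_1=\sup I_{n_1}$. If $s_1=1$ the chain stops. Otherwise $s_1\notin I_{n_1}$ by openness, and $s_1$ lies in some $I_{n_2}$ with $n_2\ne n_1$. After $n_2$, the next index is taken at $s_2=\sup I_{n_2}>s_1$, and so on. Using suprema guarantees each index is used at most once, because a later time exceeds the supremum of every earlier interval. The process therefore ends after $m\le N$ steps with $\sup I_{n_m}=1$.

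In each $I_{n_j}$ pick times $a_j<s_{j-1}$-close and $b_j$ close to $s_j$: take $a_j$ in $I_{n_j}$ near $s_{j-1}$, and $b_j\in I_{n_j}$ with $b_j$ slightly less than $s_j$. Also ensure $b_j$ and $a_{j+1}$ both lie in $I_{n_j}\cap I_{n_{j+1}}$ near $s_j$. This is possible because $I_{n_{j+1}}$ is an open neighbourhood of $s_j$, and points of $I_{n_j}$ accumulate at $s_j$. In fact we simply take $a_{j+1}=b_j=\tau_j$ with $\tau_j\in I_{n_j}\cap I_{n_{j+1}}$.

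\textbf{Step 3: estimate.} Decompose
\[
|x-y| \le \sum_{j=1}^m |\gamma(\tau_{j-1})-\gamma(\tau_j)|, \qquad \tau_0=0,\ \tau_m=1 .
\]
Both $\gamma(\tau_{j-1})$ and $\gamma(\tau_j)$ lie within $\delta$ of $K_{n_j}$. Hence
\[
|\gamma(\tau_{j-1})-\gamma(\tau_j)|\le {\rm diam}(K_{n_j})+2\delta .
\]
Summing over $j\le m\le N$ with distinct $n_j$ gives $|x-y|\le \sum_n({\rm diam}(K_n)+2\delta)$. Letting $\delta\downarrow d_K[x,y]$ yields \eqref{C8}.

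\textbf{Main obstacle.} The delicate point is Step 2: ensuring that the chain uses each $K_n$ only once and terminates. This is handled by choosing the next index at the supremum of the previous $I_{n_j}$, so it is essential that the intervals are open. A secondary subtlety is the endpoint $y$ itself, which is covered because ${\rm dist}[y;K]<\delta$.
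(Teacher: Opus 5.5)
Your proof is correct, and it reaches the bound by a somewhat different route from the paper.

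\textbf{The paper's route.} It argues globally: for $d>d_K[x,y]$ the points $x,y$ lie in one connected component $\mathcal{O}(d)$ of $\mathcal{U}[K,d]=\cup_n\mathcal{U}[K_n,d]$, and this component is covered by the open sets $\mathcal{U}[K_{n_m},d]$ it meets. Two facts then give the bound:
\begin{itemize}
\item the auxiliary inequality \eqref{prag1}: the diameter of a connected open set is at most the sum of the diameters of finitely many open sets covering it;
\item ${\rm diam}\,\mathcal{U}[K_n,d]\le{\rm diam}(K_n)+2d$.
\end{itemize}
As a by-product, the paper controls the diameter of the whole cluster component, not just $|x-y|$.

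\textbf{Your route.} You pull everything back to the parameter interval of the curve, $I_n=\gamma^{-1}(\mathcal{U}[K_n,\delta])$, and use the order of $[0,1]$. The supremum-based greedy construction explicitly produces a chain of overlapping sets with pairwise distinct indices. This is in effect a one-dimensional proof of the chaining idea behind \eqref{prag1}, and it is somewhat more robust. You never need the structure of the connected components of $\mathcal{U}[K,d]$; the identity $\mathcal{O}(d)=\cup_m\mathcal{U}[K_{n_m},d]$ is only literally true when the $\mathcal{U}[K_n,d]$ are connected. You also avoid the point where the paper's inductive proof of \eqref{prag1} tacitly assumes the sets are enumerated so that every partial union stays connected. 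That is true (remove a leaf of a spanning tree of the overlap graph), but the paper does not spell it out.

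\textbf{One small slip to fix.} When the process stops with $\sup I_{n_m}=1$, the point $1$ need not belong to $I_{n_m}$, so you cannot assert ${\rm dist}[y;K_{n_m}]<\delta$. By continuity of $t\mapsto{\rm dist}[\gamma(t);K_{n_m}]$ you still have ${\rm dist}[y;K_{n_m}]\le\delta$. This suffices for the estimate ${\rm diam}(K_{n_m})+2\delta$ in Step 3, using a nearest point in the compact $K_{n_m}$. Alternatively, append one more index $n_{m+1}$ with $1\in I_{n_{m+1}}$. It is automatically new, since $1>\sup I_{n_i}$ for $i<m$ and $1\notin I_{n_m}$.
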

\begin{proof}
Let us start with a simple observation: let $\mathcal{O}=\cup_{m=1}^{M} B_m$, where $\mathcal{O}$ is a open connected set and $B_m$ are open. Then, we have 
\begin{equation}\label{prag1}
    {\rm diam} (\mathcal{O}) \leq \sum\limits_{m=1}^M {\rm diam} (B_m).
\end{equation}
Indeed, as \eqref{prag1} clearly holds for $M=1$, assume the validity of \eqref{prag1} for $M$. Let us write 
\begin{equation*}
    \mathcal{O}=\cup_{m=1}^{M+1} B_m= \cup_{m=1}^{M} B_m \cup B_{M+1}, 
\end{equation*}
where $\cup_{m=1}^{M} B_m \cap B_{M+1} \neq \emptyset$. Thus, by induction we can conclude \eqref{prag1}. 

Now, for any $d> d_K[ x,y ]$ , where $x,y$ belong to a connected component $\mathcal{O}(d)$ of  $\mathcal{U}[K, d] = \cup_{n=1}^N \mathcal{U}[K_n, d]$. Consequently, $\mathcal{O}(d) = \cup_{m=1}^M \mathcal{U}[K_{n_m}, d]$, for some $M\leq N$. It follows from \eqref{prag1} that 
\begin{multline*}
    |x-y|\leq {\rm diam} (\mathcal{O}(d)) \leq \sum\limits_{m=1}^M {\rm diam} (\mathcal{U}[K_{n_m}, d]) \leq \sum\limits_{m=1}^M {\rm diam} (K_{n_m}) + 2d \leq  \sum\limits_{n=1}^N {\rm diam} (K_{n}) + 2d.
\end{multline*}
As $d > d_K[x,y]$ was arbitrary, the desired \eqref{C8} follows.
\end{proof}

\subsection{Clusters in the fluid--interaction problem}
\label{F}

To apply the abstract theory developed in the preceding section, we consider the metric space $X = \mathbb{T}^d$, where the compact set 
will be represented by $N$ points 
\begin{equation} \label{F1}
K = \{ X_1, \dots, X_N \},  
\ | X_i - X_j |
\geq 2 r \ \mbox{for}\ i \ne j. 
\end{equation}
It follows from \eqref{C2} that 
\begin{equation} \label{F2}
d_K [x_1, x_2] = 0 \ \Leftrightarrow\ 
x_1 = x_2 = X_n \ \mbox{for some}\ n = 1, \dots, N.
\end{equation}
Moreover, as $d_K[ X_i, X_i] = 0$, we deduce from 
\eqref{C5} with $z=y$ that 
\begin{equation} \label{F3a}
	d_K[  X_i, X_j ] \leq |X_i - X_j| ,\ i,j = 1, \dots N.
\end{equation}	
Finally, it follows from Lemma \ref{LC1} that
\begin{equation} \label{F3b} 
|X_i - X_j| \leq 2 N d_K[ X_i, X_j] ,\ i,j = 1, \dots N.
\end{equation}

\subsubsection{Cluster projection}


For any given point $X_i\in K = \{ X_1, \dots, X_N \}, i=1,\dots, N$, we define its \emph{cluster projection} with respect to $K = \{ X_1, \dots, X_N \}$ as 
\begin{equation} \label{F33}
X_{B,\delta} (X_i) = \left[ \sum_{n=1}^N \chi_\delta \Big( d_K [X_n, X_i] \Big) \right]^{-1}
\sum_{n=1}^N \left[ \chi_\delta \Big( d_K [X_n, X_i] \Big) X_n \right]
\end{equation}	
where $\chi_\delta$ is a cut-off function,
\begin{equation} \label{F4}
\chi_\delta(Z) = \chi \left( \frac{Z}{\delta} \right), \ \mbox{where}\ \chi \in C^\infty[0, \infty), \ \chi' \leq 0,\  
\chi(Z) = \left\{ \begin{array}{l} 1 \ \mbox{for}\ 
Z \in [0, 1] , \\ \exp(-Z) \ \mbox{for}\  Z \geq 2.
\end{array} \right\}	
\end{equation}	 
Note carefully that $d_K (X_i, X_i) = 0$ so the denominator in 
\eqref{F33} is always positive.

The function $\chi_\delta$ can be seen as a smooth approximation of the characteristic function $\mathds{1}_{[0, \delta]}$, for which 
the expression \eqref{F33} can be interpreted as the barycentre 
of the cluster of size $\delta$. As we shall see below, the exponential shape of $\chi_\delta$ is crucial to optimize certain estimates.

Our next objective is to estimate the distance 
\[
\Big| X_i - X_{B, \delta} (X_i) \Big| \ \mbox{for}\ 
i = 1, \dots, N. 
\]
A straightforward manipulation yields 
\begin{align} 
\Big| X_i - X_{B, \delta} (X_i) \Big|  
&\leq \left( \sum_{n=1}^N \chi_\delta \Big( d_K [X_n, X_i] \Big) \right)^{-1} \sum_{n=1}^N \chi_\delta \Big( d_K [X_n, X_i] \Big) 
| X_i - X_n | \br 
&=  \left( \sum_{n=1}^N \chi_\delta \Big( d_K [X_n, X_i] \Big) \right)^{-1} \sum_{\stackrel{n=1}{ d_K[X_n, X_i] \leq d}}^N \chi_\delta \Big( d_K [X_n, X_i] \Big) 
| X_i - X_n | \br 
&+  \left( \sum_{n=1}^N \chi_\delta \Big( d_K [X_n, X_i] \Big) \right)^{-1} \sum_{\stackrel{n=1}{ d_K[X_n, X_i] > d}}^N \chi_\delta \Big( d_K [X_n, X_i] \Big) 
| X_i - X_n | , 
\label{F5}
\end{align}
where $d = \delta |\log(\delta)| \geq 2 \delta$, $0 < \delta << 1$. On the one hand, using \eqref{F3b} we get 
\begin{align} 
 \left( \sum_{n=1}^N \chi_\delta \Big( d_K [X_n, X_i] \Big) \right)^{-1} \sum_{\stackrel{n=1}{ d_K[X_n, X_i] \leq d}}^N \chi_\delta \Big( d_K [X_n, X_i] \Big) 
| X_i - X_n | \leq 2 Nd. 
\label{F5a}
\end{align}	
On the other hand, 
\begin{align}
 \left( \sum_{n=1}^N \chi_\delta \Big( d_K [X_n, X_i] \Big) \right)^{-1} \sum_{\stackrel{n=1}{ d_K[X_n, X_i] > d}}^N \chi_\delta \Big( d_K [X_n, X_i] \Big) 
| X_i - X_n | \leq 2L N \exp \left( - \frac{d}{\delta} \right). 
\label{F5b}
\end{align}
Combining \eqref{F5}--\eqref{F5b} we conclude 
\begin{equation} \label{F5c}
| X_i - X_{B, \delta} (X_i) | \leq C(L) N \delta |\log(\delta)|
\ \mbox{for all}\ 0 < \delta < \frac{1}{2},\ i = 1 , \dots, N. 
\end{equation}

\subsubsection{Dependence of the cluster projection on parameters}

We consider the cluster projection 
\[
X_{B, \delta}(X_i) 
\ \mbox{as a function of the parameters}\ X_1, \dots, X_n,\ 
Z_n = d_K [X_n, X_i]. 
\] 

It is a routine matter to compute
\begin{equation} \label{F11}
\frac{ 	D	X_{B, \delta} (X_i) }{ D X_i} =
\left( 1 + \sum_{n \ne i} \chi_\delta \Big( d_K [X_n, X_i] \Big) \right)^{-1}, 
\end{equation}
\begin{equation} \label{F12}
	\frac{ 	D X_{B, \delta} (X_i)	}{ D X_n} =
	\left( 1 + \sum_{j \ne i}\chi_\delta \Big( d_K [X_j, X_i] \Big) \right)^{-1}  \chi_\delta \Big( d_K[ X_n, X_i] \Big) \ \mbox{for} 
	\ n\ne i,
\end{equation}
and 
\begin{align}
&\frac{D X_{B, \delta} (X_i)  }{D Z_{n}} = 
\left( \sum_{j=1}^N \chi_\delta \Big( d_K [X_j, X_i] \Big) \right)^{-2}
\left[ \sum_{j = 1}^N \chi_\delta \Big( d_K [X_j, X_i] \Big) (X_n - X_j) \right] \chi'_\delta( d_K [X_n, X_i] ).
\label{F13}
\end{align}	
Moreover, the norm of
the right--hand side in \eqref{F13} can estimated as
\begin{align}
	&\left| \frac{D X_{B, \delta} (X_i)  }{D Z_{n}} \right| = \left|
	\left(\sum_{j=1}^N \chi_\delta \Big( d_K [X_j, X_i] \Big) \right)^{-2}
	\left[ \sum_{j = 1}^N \chi_\delta \Big( d_K [X_j, X_i] \Big) (X_n - X_j) \right] \chi'_\delta( d_K [X_n, X_i] ) \right|  \br 
&{ \leq \frac{1}{\delta} \sup_{n} \left| \frac{\chi'_\delta ( d_K[X_n, X_i])} 
{ \chi_\delta ( d_K[X_n, X_i]) } \right|\chi_\delta( d_K [X_n, X_i] )
\left(\sum_{j=1}^N \chi_\delta \Big( d_K [X_j, X_i] \Big) \right)^{-1}
\Big| X_n - X_{B, \delta}[X_n] \Big| }\br 
& {\aleq N \log(\delta) \ \chi_\delta( d_K [X_n, X_i] )
\left(\sum_{j=1}^N \chi_\delta \Big( d_K [X_j, X_i] \Big) \right)^{-1}.}
	\label{F23}
\end{align}	

\subsection{Time dependence of cluster projections}

Going back to the original setting of the fluid--structure interaction problem, we consider the cluster projections related to the centres of the rigid balls,  
\[
X_n  = \vc{h}_n(t),\ n=1, \dots, N, 
\] 
with the associated functions 
\[
t \in [0,T] \mapsto X_{B, \delta}[\vc{h}_n(t)] \in \Td.
\]
 Combining the estimates on Lipschitz continuity  $d_K(x,y)$ with respect to $(x,y)$ stated in \eqref{C5},
together with the estimates obtained in Lemma \ref{LC1a}
we obtain the following result.

\begin{Lemma} [\bf Time differentiability of cluster projections] \label{LF1}

Let $\vc{h}_i$, $i = 1, \dots, N$ be continuous functions of the time $t \in [0,T]$ belonging to class $\vc{h}_i \in W^{1, \alpha}([0,T]; \Td)$, $\alpha > 1$.

Then $X_{B, \delta}(\vc{h}_i)$ are differentiable, specifically 
$X_{B, \delta}(\vc{h}_i) \in W^{1, \alpha}([0,T]; \R^d)$, 
and the following estimate
\begin{equation} \label{F24} 
\left| \frac{\D }{\dt} X_{B, \delta}(\vc{h}_i (t)) \right| \leq 
C N |\log(\delta)| \max_{n =1, \dots N} \left| \frac{\D }{\dt} 
\vc{h}_n (t) \right| \ \mbox{for a.a}\ t \in (0,T), 
\end{equation}
holds for any $i = 1, \dots, N$.
	
\end{Lemma}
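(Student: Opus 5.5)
We prove Lemma \ref{LF1} by the chain rule applied to the composite map
\[
t \mapsto \Big( \vc{h}_1(t), \dots, \vc{h}_N(t);\ Z_1(t), \dots, Z_N(t) \Big) \mapsto X_{B,\delta}(\vc{h}_i(t)),\qquad Z_n(t) = d_{K(t)}[\vc{h}_n(t), \vc{h}_i(t)],\ K(t) = \{ \vc{h}_1(t), \dots, \vc{h}_N(t) \}.
\]
The outer map, given by \eqref{F33}, is smooth in its arguments. Its denominator is at least $1$, since $\chi_\delta(Z_i) = \chi_\delta(0) = 1$, and its partial derivatives are given by \eqref{F11}--\eqref{F13}. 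It therefore suffices to show two things: that each inner function is in $W^{1,\alpha}$ with derivative bounded by a multiple of $\max_n |\vc{h}_n'(t)|$, and that summing the partial derivatives against these bounds produces the factor $N|\log(\delta)|$.

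\textbf{Step 1 (regularity of $Z_n$).} Estimate $Z_n(t_2) - Z_n(t_1)$ in three pieces: move the first argument, then the second, then the compact. For the two argument changes, the Lipschitz property \eqref{C5} together with the symmetry \eqref{C3} gives a contribution of at most $|\vc{h}_n(t_2)-\vc{h}_n(t_1)| + |\vc{h}_i(t_2)-\vc{h}_i(t_1)|$. For the change of compact, apply Lemma \ref{LC1a}, \eqref{C67}, with $K = \{1,\dots,N\}$ and $\vc{h}(t,z) = \vc{h}_z(t)$. This bounds the contribution by $\sup_z \int_{t_1}^{t_2} |\vc{h}_z'|\,\dt \le \int_{t_1}^{t_2} \max_z |\vc{h}_z'|\,\dt$. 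The torus metric is locally Euclidean, so this step is harmless. Since $\max_z|\vc{h}_z'| \le \sum_z |\vc{h}_z'| \in L^\alpha$, each $Z_n$ is absolutely continuous, lies in $W^{1,\alpha}$, and satisfies $|Z_n'(t)| \le 3 \max_m |\vc{h}_m'(t)|$ for a.a.\ $t$.

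\textbf{Step 2 (chain rule).} The outer map is $C^1$ on the relevant range. Although $\chi_\delta$ is only defined on $[0,\infty)$, it is smooth there and $Z_n \ge 0$. Composing a $C^1$ map with the absolutely continuous vector function from Step 1 yields an absolutely continuous function with the a.e.\ derivative
\[
\frac{\D}{\dt} X_{B,\delta}(\vc{h}_i) = \sum_n \frac{D X_{B,\delta}}{D X_n}\, \vc{h}_n' + \sum_n \frac{D X_{B,\delta}}{D Z_n}\, Z_n'.
\]
On the torus, the differences $X_n - X_j$ appearing in \eqref{F13} should be read in a local lift. Alternatively, one can work with $\R^d$-valued representatives on short time intervals, where the positions move less than $L$.

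\textbf{Step 3 (bounds).} By \eqref{F11}--\eqref{F12}, the weights $\partial X_{B,\delta}/\partial X_n$ are nonnegative and sum to exactly $1$. The first sum is therefore bounded by $\max_n |\vc{h}_n'|$.

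The second sum is the \textbf{main obstacle}, and it is where \eqref{F23} enters. From \eqref{F23},
\[
\Big| \frac{D X_{B,\delta}}{D Z_n} \Big| \aleq N |\log(\delta)|\, \chi_\delta(Z_n) \Big(\sum_j \chi_\delta(Z_j)\Big)^{-1}.
\]
Summing over $n$ gives the normalized weights $\chi_\delta(Z_n)(\sum_j \chi_\delta(Z_j))^{-1}$, which sum to $1$. Combined with $|Z_n'| \le 3 \max_m |\vc{h}_m'|$ from Step 1, the second sum is bounded by $C N|\log(\delta)| \max_m |\vc{h}_m'|$.

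The point to verify carefully is the justification of \eqref{F23}. Two facts are needed. First, $|\chi'/\chi|$ is bounded uniformly: on $[0,2]$ one uses $\chi \ge e^{-2}$ and the smoothness of $\chi$, and on $[2,\infty)$ one has $\chi'/\chi = -1$. Second, the quantity $|X_n - X_{B,\delta}|$ must be controlled by $C N \delta |\log \delta|$. This follows by rewriting $\sum_j \chi_\delta(Z_j)(X_n - X_j)\big(\sum_j \chi_\delta(Z_j)\big)^{-1} = X_n - X_{B,\delta}(X_i)$ and bounding it by $|X_n - X_i| + |X_i - X_{B,\delta}(X_i)|$. The term $|X_i - X_{B,\delta}(X_i)|$ is handled by \eqref{F5c}. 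For $|X_n - X_i|$ we split into two cases. If $Z_n \le \delta|\log\delta|$, then \eqref{F3b} gives $|X_n - X_i| \le 2N\delta|\log\delta|$. If $Z_n > \delta|\log\delta|$, the factor $\chi_\delta(Z_n) \le \delta$ absorbs the bound $|X_n - X_i| \le 2L$, exactly as in \eqref{F5b}.

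With these facts in hand, the factor $\delta^{-1}$ coming from $\chi_\delta'$ cancels against the factor $\delta$, leaving $N|\log\delta|$. This gives \eqref{F24}, and since $\max_m |\vc{h}_m'| \in L^\alpha$, also $X_{B,\delta}(\vc{h}_i) \in W^{1,\alpha}$.
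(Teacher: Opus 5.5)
Your proposal is correct and takes the same route the paper indicates: the chain rule through \eqref{F11}--\eqref{F13} and \eqref{F23}, with the Lipschitz dependence of $d_K$ on its arguments taken from \eqref{C5} and on the configuration taken from Lemma \ref{LC1a}. You supply considerably more detail than the paper, and your triangle-inequality split for $|X_n - X_{B,\delta}(X_i)|$ actually repairs \eqref{F23}, which as printed uses $X_{B,\delta}[X_n]$. One point of bookkeeping: for the far indices ($Z_n > \delta|\log(\delta)|$) the weight $\chi_\delta(Z_n)$ is used up in absorbing $|X_n - X_i|$, so the weighted bound you quote from \eqref{F23} does not hold there; instead each such term is bounded by a constant of order $L$, and these terms are summed by counting (giving $O(N)$, as in \eqref{F5b}) rather than through the normalized weights, which still yields $C N |\log(\delta)|$.
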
	

\section{Approximate test functions}
\label{A}

In this section, we construct suitable approximations of 
smooth functions eligible for the test functions in the momentum 
balance \eqref{m7}. They represent a time dependent analogue 
of the \emph{restriction operators} used in static homogenization. 

We start by a crucial property of the cluster projections 
$X_{B, \delta}(X_i)$, namely $X_{B, \delta}(X_i)$ coincides with 
$X_{B, \delta} (X_j)$ as soon as the points $X_i$, $X_j$ are close enough. This is a consequence of a series of elementary observations:
\begin{enumerate}
\item 
\begin{equation} \label{A1}
	d_K(X_i, X_j) \leq |X_i - X_j| ;
\end{equation} 
\item
\begin{equation} \label{A2}
|X_i - X_j| < \delta \ \Rightarrow \ d_K(X_i, X_j) < \delta 
\Rightarrow \ \chi_\delta ( d_K(X_i, X_j) ) = 
\chi_\delta ( d_K(X_j, X_i) ) = 1; 
\end{equation}	
\item 
\begin{equation} \label{A3}
|X_i -  X_j| < d_K(X_i, X_m) \ \Rightarrow \ 
d_K( X_j , X_m) = d_K(X_i, X_m)  
\end{equation}	
Indeed obviously 
\[
d_K (X_j, X_m) \leq d_K (X_i, X_m) \ \mbox{and, symmetrically}\ 
d_K (X_i, X_m) \leq d_k (X_j, X_m).
\]	

\end{enumerate}	

Summing up the above observation, we deduce the following crucial result. 

\begin{Lemma} \label{LA1}
Let $|X_i - X_j| < \delta$. 

Then	
\[
X_{B, \delta}(X_i) = X_{B, \delta}(X_j).
\]

	\end{Lemma}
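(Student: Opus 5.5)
The plan is to show that the two families of weights appearing in \eqref{F33} coincide term by term:
\[
\chi_\delta \Big( d_K[X_n, X_i] \Big) = \chi_\delta \Big( d_K[X_n, X_j] \Big) \quad \mbox{for every}\ n = 1, \dots, N.
\]
Once this holds, the numerators and the denominators in \eqref{F33} for $X_{B,\delta}(X_i)$ and $X_{B,\delta}(X_j)$ are the same sums, and the lemma follows at once.

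The key ingredient is a strengthening of the triangle inequality \eqref{C4} to the \emph{ultrametric} inequality
\[
d_K[x,z] \leq \max \Big\{ d_K[x,y], d_K[y,z] \Big\}.
\]
To prove it, I go back to Definition \ref{DC2}. Take any $\delta' > \max\{ d_K[x,y], d_K[y,z] \}$. Then there are curves $\gamma_1$ from $x$ to $y$ and $\gamma_2$ from $y$ to $z$, both staying at distance $< \delta'$ from $K$. Their concatenation is a curve from $x$ to $z$ with the same property, so $d_K[x,z] \leq \delta'$. Letting $\delta'$ decrease to the maximum gives the claim. This is precisely the mechanism behind observation \eqref{A3}, stated there with $|X_i - X_j|$ in place of the smaller quantity $d_K[X_i,X_j]$.

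Now set $a = d_K[X_i, X_j]$. By \eqref{A1} and the hypothesis, $a \leq |X_i - X_j| < \delta$. Fix $n$ and split into two cases.
\begin{itemize}
\item If $d_K[X_n, X_i] > a$, the ultrametric inequality gives $d_K[X_n, X_j] \leq \max\{ d_K[X_n,X_i], a \} = d_K[X_n, X_i]$. Conversely, $d_K[X_n, X_i] \leq \max\{ d_K[X_n, X_j], a\}$, and since $d_K[X_n,X_i] > a$ this maximum must equal $d_K[X_n,X_j]$. Hence the two distances are equal, and so are the weights.
\item If $d_K[X_n, X_i] \leq a$, then $d_K[X_n, X_j] \leq \max\{ d_K[X_n,X_i], a \} = a < \delta$. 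So both distances lie in $[0,\delta]$, and by \eqref{F4} both weights equal $1$, as in \eqref{A2}.
\end{itemize}

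The only genuine step is the ultrametric inequality. The ordinary triangle inequality \eqref{C4} is not enough: it would only give closeness of the weights, not equality. Once the ultrametric inequality is established, the rest is bookkeeping. The symmetry \eqref{C3} is used implicitly throughout, and no estimate on $\chi_\delta$ beyond $\chi_\delta = 1$ on $[0,\delta]$ is needed.
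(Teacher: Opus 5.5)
Your proof is correct and follows essentially the paper's route: both arguments show that $\chi_\delta(d_K[X_n,X_i]) = \chi_\delta(d_K[X_n,X_j])$ for every $n$, using the fact that the cluster distance to $X_n$ does not change when passing between nearby points of $K$; this is the paper's observation \eqref{A3}, which the paper asserts as ``obvious''. Your ultrametric inequality is precisely the curve-concatenation argument behind \eqref{A3}. Splitting the cases at $a = d_K[X_i,X_j]$ rather than at $\delta$ lets you settle the case $d_K[X_n,X_i] < \delta$ directly, whereas the paper handles it through an implicit symmetric application of \eqref{A3}.
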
	

\begin{proof}
First, it follows from \eqref{A2} that 
\[
\chi_\delta (d_K (X_i, X_j)) = \chi_\delta (d_K (X_j, X_i)) = 1.
\]

Next, by virtue of \eqref{A3}, 
\[
\chi_\delta (d_K (X_i, X_m)) = 
\chi_\delta (d_K (X_j, X_m)) \ \mbox{whenever}\ 
d_K (X_i, X_m) \geq \delta.
\]

Finally, if $d_K(X_i, X_m) < \delta$, we use 
\eqref{A2} to deduce 
\[
\chi_\delta (d_K(X_i, X_m)) = 1, 
\]
and, similarly, $\chi_\delta (d_K(X_j, X_m)) = 1$ whenever 
$d_K(X_j, X_m) < \delta$.

Summarizing, we get 
\[
\chi_\delta (X_m, X_i) = \chi_\delta (X_m, X_j)
\ \mbox{for any}\ m = 1, \dots , N
\]
which yields the desired result.

\end{proof}	

\subsection{Construction of approximate test functions - principal ideas} 
\label{cat}

Given a (smooth) function $\bfphi \in C^\infty(\Td; \R^d)$, $\Div \bfphi = 0$ and the centres of balls $\vc{h}_n$ 
of radius $r > 0$, our goal is to construct 
an admissible (smooth) test function $\widetilde{\bfphi}$ satisfying 
\begin{equation} \label{A4}
\widetilde{\bfphi} \approx \bfphi,\ \Div \widetilde{\bfphi} = 0,\ 
\Ds \widetilde{\bfphi} = 0 \ \mbox{on an open neighbourhood of}\ 
\cup_{n=1}^N \Ov{B(r, \vc{h}_n)}.
\end{equation}

We sketch the main idea of the construction based on the concept of cluster, in particular the cluster projections $X_{B, \delta}(\vc{h}_i)$.
\begin{enumerate}
\item We identify a ``partition of unity'' - a system of functions $R_n$, $n = 1,\dots, N$ such that 
\begin{align}
0 &\leq R_n \leq 1,\ 
\sum_{n=1}^N R_n(x) = 1 \ \mbox{if}\ x 
\in \cup_{n=1}^N \Ov{ B( \beta r, \vc{h}_n) },\ \mbox{for some}\ \beta > 1 \br
R_n(x)& = 0 \ \mbox{whenever}\ |x - \vc{h}_n| > 4 r,\ 
n = 1, \dots, N. 
\label{A5}
\end{align}
The first naive approximation of $\bfphi$ is 
\[
\widetilde{\bfphi} = \bfphi (1 - \sum_{n=1}^N R_n) .
\]
It is easy to check that 
\begin{align}
\widetilde{\bfphi} (x) &= 0 \ \mbox{whenever}\ 
|x - \vc{h}_n| <  \beta r \ \mbox{for some}\ n,\br 
\widetilde{\bfphi} (x) &= \bfphi(x) \ \mbox{whenever}\ 
|x - \vc{h}_n| > 4r \ \mbox{for all}\ n = 1, \dots, N.
\nonumber
\end{align}
Obviously, the approximation vanishes on all rigid balls, however, 
the error 
\begin{equation} \label{A6}
\widetilde{\bfphi} - \bfphi = \bfphi \sum_{n=1}^N R_n 
\end{equation}
is not optimal. 

\item To improve \eqref{A6}, we exploit the fact that condition \eqref{A4} permits approximations by rigid motions which, when restricted to the balls, can in particular be chosen to be constant. Consequently, we may consider the approximation in the form 
\begin{equation} \label{A7}
\widetilde{\bfphi} = \bfphi \left( 1 - \sum_{n=1}^N R_n \right) + 
\sum_{n=1}^N \bfphi (\vc{h}_n) R_n. 
\end{equation}
The error 
\[
\widetilde{\bfphi} - \bfphi = \sum_{n=1}^N \left(\bfphi - \bfphi(\vc{h}_n) \right) R_n 
\]
is definitely better than \eqref{A6} as long as $\bfphi$ is at least Lipschitz as
\[
\Big| \bfphi(x) - \bfphi (\vc{h}_n) \Big| \leq 4 \| \Grad \bfphi \|_{L^\infty} r
\ \mbox{for any}\ x \in B(4r, \vc{h}_n) \ \Rightarrow 
| \widetilde{\bfphi} - \bfphi | \leq 4 \| \Grad \bfphi \|_{L^\infty} 
r \ \mbox{in}\ \Td.
\]

The main problem of the construction \eqref{A7} is that, in general, $\widetilde{\bfphi}$ \emph{is not} rigid on a neighbourhood of $\cup_{n=1}^N \Ov{B(r, \vc{h}_n)}$ as long as two or more centres are close, say $|\vc{h}_i - \vc{h}_j| < 3r$, and  
$\bfphi (\vc{h}_i) \ne \bfphi (\vc{h}_j)$. 
The remedy of the situation may be seen as the principal new idea of the present paper. 
In view of Lemma \ref{LA1}, we replace the centres $\vc{h}_i, \vc{h}_j$ by their cluster projections 
$X_{B, 4 r}[ \vc{h}_i]$, $X_{B,4  r}[\vc{h}_j]$. The approximation now reads 
\begin{equation} \label{A8}
	\widetilde{\bfphi} = \bfphi \left( 1 - \sum_{n=1}^N R_n \right) + 
	\sum_{n=1}^N \bfphi \Big(X_{B, 4  r}(\vc{h}_n) \Big) R_n. 
\end{equation}  
Thanks to Lemma \ref{LA1}, 
\[
X_{B, 4 r}(\vc{h}_i) = 
X_{B, 4 r}(\vc{h}_j) \ \mbox{whenever}\ 
|\vc{h}_i - \vc{h}_j| < 4 r,  
\]
in particular $\Ds \widetilde \bfphi = 0$ on 
$B(\beta r, \vc{h}_n)$ for any $n = 1,\dots, N$ for a certain 
$\beta > 1$ depending on the system $(R_n)_{n=1}^N$.

\item The construction \eqref{A8} yields a test function with all 
properties required in \eqref{A4} except solenoidality, meaning 
$\Div \widetilde{\bfphi} \ne 0$ in $\Td$. As a matter of fact, 
$\widetilde{\bfphi}$ \emph{is} solenoidal in the $\beta r$ neighbourhood of rigid balls $\cup_{n=1}^N \Ov{ B(\beta r, \vc{h}_n) }$ and also in the set $\Td \setminus \cup_{n=1}^N \Ov{ B( 4r, \vc{h}_n) }$.  

There are two ways how to solve this problem. The first possibility is to apply a suitable inverse of $\Div-$operator, for instance the so-called Bogovskii operator (see \cite{BOG}),  
on $\Div \widetilde{\bfphi}$ in the set 
$\cup_{n=1}^N { B( 4r, \vc{h}_n) } \setminus \Ov{ B(\beta r, \vc{h}_n) }$. The second option, adapted in the present paper, is 
``lifting'' the problem to the potential $\bfpsi$, $\Curl \bfpsi = \bfphi$, replacing $\bfpsi$ by a suitable $\widetilde{\bfpsi}$, 
and setting $\widetilde{\bfphi} = \Curl \widetilde{\bfpsi}$.
\end{enumerate}	

\subsection{Decomposition of unity on rigid balls}

Our first task is to construct the functions $R_n$ enjoying the 
property \eqref{A5}. The building block is the standard cut-off 
function $\Phi$, 
\begin{align}
\Phi &\in C^\infty_c[0, \infty),\ 
\Phi (y) = 1 \ \mbox{for}\ 0 \leq y \leq \beta ,\ 
\Phi (y) = 0 \ \mbox{for}\  y \geq 2,\ \Phi' (y) \leq 0, \br  
\mbox{where} \ 1 &< \beta < 2,\ 
\Phi_r (y) = \Phi \left( \frac{y}{r} \right).  
\nonumber
\end{align} 
We set 
\begin{align}
R_n (x) = \Phi_r (| x - \vc{h}_n|) \Pi_{n < j \leq N} \Big( 1 - \Phi_r (| x - \vc{h}_j|) \Big) ,\ 1 \leq n < N,\ 
{R_N(x) = \Phi_r (| x - \vc{h}_N|)}.	 
\label{A9} 
\end{align}	
Obviously, {
\begin{equation*}
|x - \vc{h}_n| > 2r \ \Rightarrow \ R_n (x) = 0. 
\end{equation*}
}
Finally, we introduce the partial sums 
\[
S_M = \sum_{n=1}^M R_n. 
\]
It is easy to check that 
\begin{equation} \label{A11}
S_N = S_{N-1} \Big(1 - \Phi_r(| x - \vc{h}_N|) \Big) + 
\Phi_r(| x - \vc{h}_N|).
\end{equation}	
We claim the following:
\begin{equation} \label{A12}
\Phi_r(| x - \vc{h}_m|)  = 1 
\ \mbox{for some}\ m \ \Rightarrow \ 
S_N (x) = \sum_{n=1}^N R_n(x) = 1. 
\end{equation}
In view of \eqref{A11}, the claim \eqref{A12} is easy to prove by induction. As the statement is obvious for $N=1$, we suppose 
\eqref{A12} is valid for $N = 1$. But then either $\Phi_r(| x - \vc{h}_N|) = 1$ or, by induction hypothesis, $S_{N-1}(x) = 1$; whence \eqref{A11} yields the claim.

\subsection{Construction of approximate test functions}

Let $\bfphi = \bfphi (t,x)$ be a (sufficiently) smooth function, $\Div \bfphi (t, \cdot) = 0$, $\intTd{\bfphi (t, \cdot)} = 0$
for any $t \in [0,T]$. To simplify 
presentation, we restrict ourselves to the case $d = 3$. 
However, the same construction applies if $d = 2$ with obvious modifications.
 
To guarantee solenoidality, we ``lift'' the approximate process to the level 
of vector potential $\bfPsi$ obtained in the standard way: 
\[
\bfphi = \Curl \bfPsi, \ \mbox{meaning}\ 
\Del \bfPsi = - \Curl \bfphi ,\ \Div \bfPsi = 0,\ \intTdd{ \bfPsi } = 0. 
\]  

Following the ideas delineated in Section \ref{cat}, we set 
\begin{align} 
\bfPsi_{\rm app}(t,x) &= \bfPsi (t,x) \left[ 1 - \sum_{n=1}^N 
\Phi_r (| x - \vc{h}_n(t)|) \Pi_{n < j \leq N} \Big( 1 - \Phi_r (| x - \vc{h}_j(t)|) \Big)\right] \br 
&+\sum_{n=1}^N \left[
\bfPsi \Big(t, X_{B,4r}(\vc{h}_n(t))\Big)
+
\Grad \bfPsi \Big(t, X_{B,4r}(\vc{h}_n(t))\Big)
\cdot
\Big(x - X_{B,4r}(\vc{h}_n(t))\Big)
\right] \times \br
&\quad \times 
\Phi_r (| x - \vc{h}_n(t)|) \Pi_{n < j \leq N} \Big( 1 - \Phi_r (| x - \vc{h}_j(t)|) \Big).
\label{A13}
	\end{align}	
Finally, we set
\begin{equation} \label{A14}
\bfphi_{\rm app}(t,x) = \Curl \bfPsi_{\rm app} (t,x).
\end{equation}

The function $\bfphi_{\rm app}$ obviously inherits the regularity of 
$\bfphi$ and satisfies 
\[
\Div \bfphi_{\rm app}(t, \cdot) = 0,\ \intTdd{ \bfphi_{\rm app} (t, \cdot) } = 0 \ \mbox{for any}\ t \in [0,T].
\]
Moreover, by virtue of \eqref{A12},
\[ 
\sum_{n=1}^N 
\Phi_r (| x - \vc{h}_n(t)|) \Pi_{n < j \leq N} \Big( 1 - \Phi_r (| x - \vc{h}_j(t)|) \Big) = 1 \ \mbox{whenever}\ 
|x - \vc{h}_i| < \beta r 
\]
for $i = 1, \dots, N$. { As $1 < \beta < 2$, fixing the index $i$ we observe 
\[
|x - \vc{h}_i| < \beta r \ \mbox{and}\ 
\Phi_{r}(| x - \vc{h}_n |) \ne 0 \ \mbox{for some}\ 
n \ne i \ \Rightarrow \ | \vc{h}_i - \vc{h}_n | \leq \beta r + 2 r < 4r. 
\] 
}
Consequently, by virtue of Lemma \ref{LA1}, 
{
\[
|x - \vc{h}_i| < \beta r \ \mbox{and}\ 
\Phi_{r}(| x - \vc{h}_n |) \ne 0 \ \mbox{for some}\ 
n \ne i \ \Rightarrow\ X_{B, 4r}(h_i) = X_{B, 4r}(h_n). 
\]
}
We conclude 
\begin{align}
&\sum_{n=1}^N \left[  \bfPsi \Big(t, X_{B, 4 r}(\vc{h}_n(t) ) \Big) + 
\Grad \bfPsi \Big( t, X_{B, 4r} (\vc{h}_n(t)) \Big) \cdot \Big(x - X_{B, 4r} (\vc{h}_n(t)) \Big)   \right] \times \br
&\quad \times 
\Phi_r (| x - \vc{h}_n(t)|) \Pi_{n < j \leq N} \Big( 1 - \Phi_r (| x - \vc{h}_j(t)|) \Big)  \br 
&\quad = \bfPsi \Big(t, X_{B, 4 r}(\vc{h}_i(t) ) \Big) + 
\Grad \bfPsi \Big( t, X_{B, 4r} (\vc{h}_i(t)) \Big) \cdot \Big(x - X_{B, 4r} (t, \vc{h}_i(t)) \Big) 
\nonumber
\end{align}
for any $x \in B(\beta r, \vc{h}_i)$ and any $i = 1,\dots, N$ yielding the desired conclusion 
\begin{equation} \label{A15}
\bfphi_{\rm app}(t,x) = (\Curl \bfPsi) (t, X_{B, 4r} (\vc{h}_i(t))) = 
\bfphi (t, X_{B, 4r} (\vc{h}_i(t))) 
\ \mbox{for} \ x \in B(\beta r, \vc{h}_i(t)),\ i = 1, \dots, N.
\end{equation}

\subsection{Rate of convergence of approximations}
\label{CA}

Our ultimate task in this section is to evaluate the distance $\bfphi_{\rm app} - 
\bfphi$ in terms of the parameters $r$, $N$. depend on a small parameter $\ep \to 0$. Note carefully that no information on the approximate densities is needed at this stage. 

\begin{Proposition} [\bf Order of approximation] \label{PCA1}
	
Let $\bfphi$ be a smooth function, specifically $\bfphi$, $\nabla^2_x \bfphi$, $\partial_t \bfphi$ continuous in $[0,T] \times \Td$, and
$\Div \bfphi (t, \cdot) = 0$, $\intTd{ \bfphi(t, \cdot) } = 0$ for any $t \in [0,T]$. Let 
 $\{\vc{h}_n \in W^{1,\alpha}(0,T; \R^d)$, $\alpha > 1\}_{n=1}^N$ be a family of functions such that
\begin{equation} \label{CA1}
r N \leq 1,\ |\vc{h}_i(t) - \vc{h}_j(t)| \geq 2r \ \mbox{for any}\ i \ne j
\ \mbox{and any}\ t \in [0,T] .
\end{equation}	

Then there holds:
\begin{itemize} 
\item	
{\bf Space derivative estimates.} 
\begin{equation} \label{CA3}
\sup_{t \in [0,T]} \| (\bfphi - \bfphi_{\rm app})(t, \cdot)  \|_{L^q(\Td; R^{d} )} \aleq 
\| D_x \bfphi \|_{L^\infty}  N^2 r |\log(r)|^2 N^{\frac{1}{q}} r^{\frac{d}{q}},
\end{equation}	
\begin{equation} \label{CA5}
\sup_{t \in [0,T]}	\| \Grad (\bfphi - \bfphi_{\rm app} )(t, \cdot) \|_{L^q(\Td; R^{d^2 } )} \aleq 
	\| D^2_x \bfphi \|_{L^\infty} N^2 |\log(r)|^2 N^{\frac{1}{q}} r^{\frac{d}{q}}
\end{equation}
for any finite $q \geq 1$.

\item {\bf Time derivative estimates.}
 \begin{equation} \label{CA7}
\| \partial_t ( \bfphi - \bfphi_{\rm app} )(t, \cdot) \|_{L^q(\Tdd; R^{d} )}  \aleq 
	\| D_t \bfphi \|_{L^\infty} N^2  |\log(r)|^2 N^{\frac{1}{q}} r^{\frac{d}{q} } \left( 
	1 + \max_{1 \leq n \leq N} \left| \frac{\D \vc{h}_n (t) }{\dt} \right| \right)
\end{equation}
for a.a. $t \in [0,T]$, and for any finite $q \geq 1$.

\item {\bf Estimates on rigid balls.}

\begin{equation} \label{CA88}
\sup_{t \in[0,T]}	\| (\bfphi - \bfphi_{\rm app}) (t, \cdot)  \|_{L^q(\cup_{n=1}^N B(r; \vc{h}_n(t) ); \R^d)} \aleq \| \bfphi \|_{L^\infty} r^2 |\log(r)| N
N^{\frac{1}{q}} r^{\frac{d}{q}} ,
\end{equation}
and
\begin{equation} \label{CA8}
	\| \partial_t  \bfphi_{\rm app} (t, \cdot)  \|_{L^q(\cup_{n=1}^N B(r; \vc{h}_n(t) ); \R^d)} \aleq
	C \| D_t \bfphi \|_{L^\infty} N^{\frac{1}{q}} r^{\frac{d}{q}} \left( 1 + N |\log(r)| \max_{1 \leq n \leq N} \left| \frac{\D \vc{h}_n (t) }{\dt} \right| 
	\right)
\end{equation}
for a.a. $t \in (0,T)$.
	
\end{itemize}	

\end{Proposition}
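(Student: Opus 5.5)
We argue directly from the explicit formula \eqref{A13}--\eqref{A14}. Write $\Pi_n(t,x) = \Phi_r(|x-\vc{h}_n(t)|)\,\Pi_{n<j\le N}\big(1-\Phi_r(|x-\vc{h}_j(t)|)\big)$ and $Y_n = X_{B,4r}(\vc{h}_n(t))$. Then
\[
\bfPsi - \bfPsi_{\rm app} = \sum_{n=1}^N \Big[ \bfPsi(x) - \bfPsi(Y_n) - \Grad\bfPsi(Y_n)\cdot(x-Y_n)\Big]\, \Pi_n =: \sum_{n=1}^N \mathcal{E}_n \,\Pi_n ,
\]
and $\bfphi-\bfphi_{\rm app} = \sum_n \big(\Curl \mathcal{E}_n\, \Pi_n + \Grad \Pi_n \times \mathcal{E}_n\big)$. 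Both terms are supported in $\cup_n B(2r,\vc{h}_n)$, whose measure is $\aleq N r^d$; this produces the common factor $N^{1/q} r^{d/q}$ in all estimates.

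Next we need pointwise bounds. On ${\rm supp}\,\Pi_n$ we have $|x-\vc{h}_n|\le 2r$. By \eqref{F5c} with $\delta = 4r$, we get $|\vc{h}_n - Y_n| \aleq N r |\log r|$, hence $|x - Y_n| \aleq N r|\log r|$ (using $rN\le 1$ only to keep $Y_n$ in a fixed neighbourhood). Taylor's theorem gives
\[
|\mathcal{E}_n| \aleq \|D^2_x\bfPsi\|_\infty |x-Y_n|^2, \qquad |\Grad\mathcal{E}_n| \aleq \|D^2_x\bfPsi\|_\infty |x-Y_n|, \qquad |\nabla^2_x\mathcal{E}_n|\aleq \|D^2_x\bfPsi\|_\infty ,
\]
where $\|D^k\bfPsi\|_\infty\aleq \|D^{k-1}\bfphi\|_\infty$ by elliptic regularity (up to harmless Hölder losses absorbed in the smoothness assumption). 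We also have $|\Grad^k \Pi_n| \aleq r^{-k}$. Since the products $\Pi_n$ overlap only for centres within $4r$, which are at most $O(1)$ many by the separation $|\vc{h}_i-\vc{h}_j|\ge 2r$, the sum over $n$ is pointwise finite. This yields
\[
|\bfphi-\bfphi_{\rm app}| \aleq \|D_x\bfphi\|_\infty\big(N r|\log r| + r^{-1}N^2r^2|\log r|^2\big) \aleq \|D_x\bfphi\|_\infty N^2 r|\log r|^2 ,
\]
which gives \eqref{CA3}. One more derivative gives \eqref{CA5}, with the worst term $r^{-2}|\mathcal{E}_n|\aleq N^2|\log r|^2$. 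For \eqref{CA88} we use \eqref{A15}: on $B(r,\vc{h}_i)$ we have $\bfphi-\bfphi_{\rm app} = \bfphi(x)-\bfphi(Y_i)$, bounded by $|x-Y_i|\aleq Nr|\log r|$ times a derivative. This is already better than the $r^2$ in the stated bound would suggest only if the constant is read generously; I would present the bound $\aleq Nr|\log r|$ and check against the stated form.

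For the time derivatives, differentiate in $t$. The contributions are of three kinds. First, $\partial_t\bfPsi$ terms, estimated exactly as above with $D_t$ in place of $D_x$. Second, $\partial_t \Pi_n$, which involves $r^{-1}|\dot{\vc{h}}_j|$. Third, $\frac{\D}{\dt}Y_n$, which is controlled by Lemma \ref{LF1}: $|\dot Y_n|\aleq N|\log r|\max_m|\dot{\vc{h}}_m|$, entering through $\partial_{Y}\mathcal{E}_n = O(|x-Y_n|)$. Collecting terms gives \eqref{CA7}. On the balls, \eqref{A15} shows $\partial_t\bfphi_{\rm app} = \partial_t\bfphi(Y_i) + \Grad\bfphi(Y_i)\dot Y_i$, and Lemma \ref{LF1} immediately yields \eqref{CA8}.

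The main obstacle is the time differentiation of $Y_n$. The map $X_{B,\delta}$ depends on the nonsmooth cluster distance $d_K$, so we rely on Lemma \ref{LF1} holding for a.a.\ $t$ via the Lipschitz bounds \eqref{C5}, \eqref{C68}. We also need to ensure that the bounded-overlap counting of the $\Pi_n$ holds uniformly in time.
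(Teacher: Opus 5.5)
Your argument is essentially the paper's proof. It uses the same error formula for $\bfPsi-\bfPsi_{\rm app}$, the volume factor $N^{1/q}r^{d/q}$ from the supports, bounded overlap, Taylor expansion combined with \eqref{F5c}, Lemma~\ref{LF1} for $\frac{\D}{\dt}Y_n$, and \eqref{A15} on the balls.

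In two places your bookkeeping is tighter than the paper's. For \eqref{CA5} you differentiate $\Curl(\Pi_n\mathcal{E}_n)$ directly by the product rule. This gives pointwise bounds and hence every $q\geq 1$; the paper's route through Friedrichs--Gaffney and $\Curl\Curl E_n$ needs $1<q<\infty$. For \eqref{CA7} you use $\partial_Y\mathcal{E}_n=-\nabla_x^2\bfPsi(Y_n)(x-Y_n)=O(|x-Y_n|)$, which is exactly what compensates the factor $r^{-1}$ coming from $\Grad\Pi_n$. The paper's \eqref{CA17} carries no factor $|x-Y_n|$ (and no $\max_n|\frac{\D\vc{h}_n}{\dt}|$). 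Multiplied by $|\Grad R_n|\aleq r^{-1}$, it would only give $r^{-1}N|\log r|$, which is not dominated by the right-hand side of \eqref{CA7} when $rN|\log r|\to 0$. For the part of that term involving $\partial_t\bfPsi$ alone, a first-order bound $\aleq\|\Grad\partial_t\bfPsi\|_{L^\infty}|x-Y_n|$ already suffices after multiplying by $r^{-1}$. So you do not need $\nabla^2_x\partial_t\bfPsi$, as your phrase ``exactly as above'' would suggest.

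On \eqref{CA88} your remark is backwards. $Nr|\log r|$ is \emph{weaker} than the printed $Nr^2|\log r|$ by a factor $r^{-1}$, and no reading of the constant absorbs that. However, the printed bound cannot be proved, because it is false. Take $N=1$: then $X_{B,4r}(\vc{h}_1)=\vc{h}_1$, and \eqref{A15} gives $\bfphi-\bfphi_{\rm app}=\bfphi(x)-\bfphi(\vc{h}_1)$ on $B(r,\vc{h}_1)$. Whenever $\Grad\bfphi(\vc{h}_1)\neq 0$, its $L^q$ norm is of order $|\Grad\bfphi(\vc{h}_1)|\,r^{1+d/q}$, which is not $\aleq r^{2+d/q}|\log r|$ as $r\to 0$.

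The paper's own argument (the second term of \eqref{CA10}, estimated by \eqref{CA11}) yields exactly your bound $\|D_x\bfphi\|_{L^\infty}\,Nr|\log r|\,N^{1/q}r^{d/q}$. So state plainly that \eqref{CA88} holds with $r$ in place of $r^2$ and $\|D_x\bfphi\|_{L^\infty}$ in place of $\|\bfphi\|_{L^\infty}$, and that this power of $r$ cannot be improved. This costs nothing downstream, since \eqref{CA88} is not used in Sections~\ref{w} and~\ref{T}.
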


\begin{proof}

Revisiting formulae \eqref{A13}, \eqref{A14} we get 
\begin{align} 
&(\bfPsi - \bfPsi_{\rm app} )(t,x) \br &= 
\sum_{n=1}^N  R_n (t,x) \left[ \bfPsi(t,x) - \left(  \bfPsi \Big(t, X_{B, 4 r}(\vc{h}_n(t) ) \Big) + 
\Grad \bfPsi \Big( t, X_{B, 4r} (\vc{h}_n(t)) \Big) \cdot \Big(x - X_{B, 4r} (\vc{h}_n(t)) \Big)   \right) \right], 
\label{CAA1} 
\end{align}
where 
\[
R_n (t,x) = \Phi_{r}(|x - \vc{h}_n(t)|) \Phi_{n < j \leq N} 
\Big(1 - \Phi_r(|x - \vc{h}_j(t)| ) \Big).
\]
Obviously $R_n = 0$ as soon as $|x - \vc{h}_n| \geq 2r$ for all 
$\vc{h}_n$, $n=1,\dots, N$. Consequently, it is enough to show \emph{uniform} bounds for \eqref{CAA1}  
on each ball $B(2r; \vc{h}_n)$; the $L^q$ norm is then obtain by multiplying the result on 
\begin{equation} \label{CAALQ}
N^{\frac{1}{q}} r^{\frac{d}{q}} \approx \| \mathds{1}_{\cup_{n=1}^N B(2r, \vc{h}_n}) \|_{L^q(\Td)}.  
\end{equation}

\medskip

\noindent {\bf Step 1.} 
{
The first crucial observation is
\begin{equation} \label{CAA11}
x \in B(2r, \vc{h}_n) \ \mbox{and}\ \Phi_r (|x - \vc{h}_j(t)|) \ne 0 
\ \Rightarrow \ |\vc{h}_n - \vc{h_j}| \leq 4 r
\end{equation}

As the number of balls of radius $r$ that can be contained in a ball of radius $4r$ in 
$\R^d$ does not exceed 
\[
\frac{ c(d) (3 r)^d  }{ c(d) r^d} = 4^d,\  \ d =2,3, 
\]
we conclude that for each $i = 1, \dots, n$, there is a set $P(i) \subset \{ 1, \dots, N \}$, $\# P(n) \leq 4^d$ such that 
\begin{align} 
	&(\bfPsi - \bfPsi_{\rm app} )(t,x) \br &= 
	\sum_{n \in P(i)}  R_n (t,x) \left[ \bfPsi(t,x) - \left(  \bfPsi \Big(t, X_{B, 4 r}(\vc{h}_n(t) ) \Big) + 
	\Grad \bfPsi \Big( t, X_{B, 4r} (\vc{h}_n(t)) \Big) \cdot \Big(x - X_{B, 4r} (\vc{h}_n(t)) \Big)   \right) \right], \br 
	& \mbox{for any}\ x \in B(2r, \vc{h}_i),\ i = i, \dots, N.
	\label{CAA1A} 
\end{align}
Accordingly, the sum in \eqref{CAA1A} contains at most $4^d$ non--zero terms and it is enough to show the estimates for 
each
}
\begin{align}
&E_n (t,x) \br &= R_n (t,x) \left[ \bfPsi(t,x) - \left(  \bfPsi \Big(t, X_{B, 4 r}(\vc{h}_n(t) ) \Big) + 
\Grad \bfPsi \Big( t, X_{B, 4r} (\vc{h}_n(t)) \Big) \cdot \Big(x - X_{B, 4r} (\vc{h}_n(t)) \Big)   \right) \right]
\nonumber
\end{align}
$n \leq N$ fixed. 

\medskip 

\noindent{\bf Step 2.} We consider $\Curl E_n$ to obtain the approximation error for $\bfphi$:
\begin{align} 
&\Curl E_n(t,x) \br
& =  \Grad R_n (t,x) \times  	\left[ \bfPsi(t,x) - \left(  \bfPsi \Big(t, X_{B, 4 r}(\vc{h}_n(t) ) \Big) + 
\Grad \bfPsi \Big( t, X_{B, 4r} (\vc{h}_n(t)) \Big) \cdot \Big(x - X_{B, 4r} (\vc{h}_n(t)) \Big)   \right) \right] \br 
&\quad + R_n(t,x) \Big[ \bfphi(t,x) - \bfphi(t, X_{B, 4r}(\vc{h}_n(t)) \Big], 
\label{CA10}
\end{align}	 
where, by virtue of \eqref{F5c},  
\begin{align} 
&|\Grad R_n| \aleq \frac{1}{r} ,\ |R_n| \aleq 1, \br
&	\left| \bfPsi(t,x) - \left(  \bfPsi \Big(t, X_{B, 4 r}(\vc{h}_n(t) ) \Big) + 
\Grad \bfPsi \Big( t, X_{B, 4r} (\vc{h}_n(t)) \Big) \cdot \Big(x - X_{B, 4r} (\vc{h}_n(t)) \Big)   \right) \right| \br 
&\quad \aleq \|D^2_x \bfPsi \|_{L^\infty} \Big|x - \vc{h}_n(t) + \vc{h}_n(t) - 
X_{B, 4r}[\vc{h}_n(t)] \Big|^2 \aleq \|D_x \bfphi \|_{L^\infty} 
N^2 r^2 |\log(r)|^2, \br 
&\Big| \bfphi(t,x) - \bfphi(t, X_{B, 4r}(\vc{h}_n(t)) \Big|
\aleq \|D_x \bfphi \|_{L^\infty} \Big|x - \vc{h}_n(t) + \vc{h}_n(t) - 
X_{B, 4r}[\vc{h}_n(t)] \Big| \br 
&\quad \aleq \|D_x \bfphi \|_{L^\infty} N r  |\log(r)|. 
\label{CA11}	
\end{align}	
Using \eqref{CAALQ} we get \eqref{CA3}.

\medskip

\noindent {\bf Step 3.} To estimate the gradient of $\bfphi - 
\bfphi_{\rm app}$, it is enough to handle $\nabla(\Curl E_n)$. As $\Div(\Curl E_n)=0$, and 
$\Curl E_n$ is of zero mean, we may use Friedrichs--Gaffney inequality to 	 
control $\|\nabla(\Curl E_n)\|_{L^p}$ by only $ \|\Curl \Curl E_n\|_{L^p}$.
A direct computation yields
\begin{align} 
&\Curl \Curl E_n \br &= - \left[ \bfPsi(t,x) - \left(  \bfPsi \Big(t, X_{B, 4 r}(\vc{h}_n(t) ) \Big) + 
\Grad \bfPsi \Big( t, X_{B, 4r} (\vc{h}_n(t)) \Big) \cdot \Big(x - X_{B, 4r} (\vc{h}_n(t)) \Big)   \right) \right] \Del R_n(t,x)
\br 
&+ \left[ \bfPsi(t,x) - \left(  \bfPsi \Big(t, X_{B, 4 r}(\vc{h}_n(t) ) \Big) + 
\Grad \bfPsi \Big( t, X_{B, 4r} (\vc{h}_n(t)) \Big) \cdot \Big(x - X_{B, 4r} (\vc{h}_n(t)) \Big)   \right) \right]\cdot \Grad ( \Grad R_n (t,x) )  \br
&- \Grad R_n (t,x) \cdot \left[ \Grad \bfPsi(t,x) - 
\Grad \bfPsi \Big( t, X_{B, 4r} (\vc{h}_n(t)) \Big)  \right]\br  
& +	\Grad R_n(t,x) \times \Big[ \bfphi(t,x) - \bfphi(t, X_{B, 4r}(\vc{h}_n(t)) \Big]  + R_n(t,x) \Curl \bfphi(t,x). 
\label{CA14}
\end{align}	
Seeing that 
\[
|\Grad^2 R_n| \aleq \frac{1}{r^2} 
\]
we may repeat the arguments \eqref{CA11} to deduce \eqref{CA5}. 

\medskip \noindent {\bf Step 5.} 

Finally, we estimate the error for the time derivative. To this end, 
we write $R_n = R_n(t,x, X_n)$, $X_n = \vc{h}_n$ as a function of three variables. Differentiating \eqref{CA10} we get
\begin{align}
&\partial_t \Curl E_n  = 
\partial_t \Grad R_n (t,x, \vc{h}_n) \times  \br &\times	\left[ \bfPsi(t,x) - \left(  \bfPsi \Big(t, X_{B, 4 r}(\vc{h}_n(t) ) \Big) + 
\Grad \bfPsi \Big( t, X_{B, 4r} (t,\vc{h}_n(t)) \Big) \cdot \Big(x - X_{B, 4r} (\vc{h}_n(t)) \Big)   \right) \right] \br
&+ \Grad R_n (t,x, \vc{h}_n) \times \br 
&\times	\partial_t \left[ \bfPsi(t,x) - \left(  \bfPsi \Big(t,X_{B, 4 r}(\vc{h}_n(t) ) \Big) + 
\Grad \bfPsi \Big( t, X_{B, 4r} (\vc{h}_n(t)) \Big) \cdot \Big(x - X_{B, 4r} (\vc{h}_n(t)) \Big)   \right) \right]\br &+\partial_t R_n(t,x) \Big[ \bfphi(t,x) - \bfphi(t, X_{B, 4r}(\vc{h}_n(t))\Big] + R_n(t,x) \partial_t\Big[ \bfphi(t,x) - \bfphi(t, X_{B, 4r}(\vc{h}_n(t)) \Big]. 
\label{CA15}
\end{align}	 
Furthermore, 
\begin{equation} \label{CA16}
\left| \partial_t \Grad R_n (t,x, \vc{h}_n) \right| 
\aleq r^{-2} \max_{n=1, \dots, N} \left| \frac{\D \vc{h}_n(t)}{\dt} \right|  ,
\end{equation}	
and 
\begin{equation} \label{CA16a}
\left| \bfPsi(t,x) - \left(  \bfPsi \Big(t, X_{B, 4 r}(t, \vc{h}_n(t) ) \Big) + 
\Grad \bfPsi \Big( t, X_{B, 4r} (\vc{h}_n(t)) \Big) \cdot \Big(x - X_{B, 4r} (\vc{h}_n(t) )\Big)   \right) \right| \aleq N^2 r^2 |\log(r)|^2 .
\end{equation}	

Moreover, by virtue of \eqref{F24},  
\begin{align} 
&\left| \partial_t \left[ \bfPsi(t,x) - \left(  \bfPsi \Big(t, X_{B, 4 r}(\vc{h}_n(t) ) \Big) + 
\Grad \bfPsi \Big( t, X_{B, 4r} (\vc{h}_n(t)) \Big) \cdot \Big(x - X_{B, 4r} (\vc{h}_n(t)) \Big)   \right) \right]	\right| 
\br &\aleq C \| \partial_t \Grad \Psi \|_{L^\infty} 
(1 + N |\log(r) |) 
\label{CA17}
\end{align}	
Putting together \eqref{CA15}--\eqref{CA17} we get \eqref{CA7}. 

Finally, to see \eqref{CA88}, \eqref{CA8} we realize that the first term 
in \eqref{CA10} depending on $\Grad R_n(t,x)$ vanishes on the set 
$\cup_{n=1}^N B(r, \vc{h}_n (t))$ as $R_n(t,x) = 1$.

\end{proof}

\section{Convergence to a weak solution - proof of Theorem \ref{Tm1}}
\label{w}

We are ready to perform the limit in the sequence 
of solutions $(\vre, \vue)_{\ep > 0}$ to the fluid--structure 
interaction problem \eqref{m6}--\eqref{m9} 
under the hypotheses of Theorem \ref{Tm1}.

It follows from hypothesis \eqref{m11}, \eqref{m12}  that
\begin{equation} \label{w1}
\| \vr_{0, \ep} \|_{L^\gamma(\Td)} \aleq 1 \ \mbox{for a certain}\ \gamma > \frac{d}{2}. 
\end{equation}
meaning 
{
\begin{equation} \label{w2}
\intTd{ \vr_{0, \ep}^\gamma } \aleq 
\left(  \vr^\gamma_F +  N r^{d} {\Ov{\vr}}^\gamma_\ep \right) \aleq 1.
\end{equation}}
Consequently, as $\vr_\ep$ satisfy the equation of continuity 
\eqref{m6}, the $L^q-$norms are conserved and we deduce
\begin{equation} \label{w3}
\vre \to \vr_F \ \mbox{in}\ C([0,T]; L^q(\Td)) \ \mbox{for any}\ 1 \leq q < \gamma. 
\end{equation}
Note carefully that the total volume of the rigid balls is controlled by the quantity $N_\ep r_\ep^d \to 0$ as $\ep \to 0$.

Since  
\[
\intTd{ \vre \vc{g} \cdot \vue } \leq \| \vre \|_{L^1(\Td)}^{\frac{1}{2}} \|\vc{g} \|_{L^\infty(\Td; \R^d)} \| \sqrt{\vre} \vue \|_{L^2(\Td; \R^d)}, 
\]
and \eqref{m10} holds, 
the energy inequality \eqref{m9} yields the uniform bounds 
\begin{equation} \label{w3a}
(\vre \vue)_{\ep > 0} \ \mbox{bounded in}\ L^\infty(0,T; L^{\frac{2 \gamma}{\gamma + 1}}(\Td; \R^d)),  \ (\vue)_{\ep > 0} 
\ \mbox{bounded in}\ L^2(0,T; W^{1,2}(\Td; \R^d)).
\end{equation}
Thus, keeping \eqref{w3} in mind, we may extract a subsequence (not relabelled for simplicity) such that 
\begin{align} 
\vre \vue &\to \vr_F \vu \ \mbox{weakly-(*) in} \ L^\infty(0,T; 
L^{\frac{2 \gamma}{\gamma + 1}}(\Td, \R^d)), \ \frac{2 \gamma}{\gamma + 1} > \frac{6}{5} \ \mbox{if}\ d = 3, \ \gamma > 1 \ \mbox{if}\ d = 2. \br
\vue &\to \vu \ \mbox{weakly in}\ L^2(0,T; W^{1,2}(\Td; \R^d))
\ \mbox{as}\ \ep \to 0,\ \Div \vu = 0, \br 
\vre \vue \otimes \vue &\to \Ov{ \vr \vu \otimes \vu } \ \mbox{weakly in} \ L^2(0,T; L^r(\Td; R^{d^3})) 
\ \mbox{for some}\ r > 1. 
\label{w4}
\end{align}	

\subsection{Bounds on the rigid velocities}

To perform the limit in the momentum equation \eqref{m7}, we need uniform bounds on the rigid translations velocities 
$\frac{\D \hne }{\dt}$. Similarly to the above, they follow 
from the energy inequality \eqref{p10}. We report the following estimates shown in \cite[Sections 3.1, 3.2]{FeiRoyZar2022}:
\begin{align} 
\left| \frac{\D \hne (t) }{\dt} \right| &\leq c(p) \| \vue (t, \cdot) \|_{
W^{1,2}(\Td; \R^d) } r^{- \frac{1}{p}} \ \mbox{for any}\ n=1, \dots, N,\br  p &= 2 \ \mbox{if}\ d = 3,\ p \geq 1 \ \mbox{arbitrary}
\ \mbox{if}\ d = 2. 
\label{w5}
\end{align}	

\subsection{Convergence}

We have collected all the necessary material to show the convergence to the Navier--Stokes system claimed in Theorem \ref{Tm1}.
We consider the approximation $\bfphi_{\rm app}^\ep$ constructed in 
\eqref{A13}, \eqref{A14} for $r = r_\ep$, $N = N_\ep$.

\subsection{Convergence of terms containing space derivatives} 

In accordance with \eqref{CA5}, \eqref{w4}, we have 
\begin{equation} \label{conv1} 
\int_0^T \intTd{ \mathbb{S}(\Ds \vue) : \Grad \bap } \dt \to \int_0^T \intTd{ \mathbb{S}(\Ds \vu) : \Grad \bfphi } \dt \ \mbox{as}\ \ep \to 0
\end{equation}	
as long as 
\begin{equation} \label{conv2}
N^2_\ep |\log(r_\ep)|^2 N^{\frac{1}{2}}_\ep r^{\frac{d}{2}}_\ep \to 0 \ \mbox{as}\ \ep \to 0.	
\end{equation}	
In addition, seeing that 
\begin{align}
\int_0^T \intTd{ \vre (\vue \otimes \vue) : \Grad \bap } \dt &= \int_0^T \intTd{ \vre (\vue \otimes \vue) : \Ds \bap } \dt \br
&= \int_0^T \intTd{ \vr_F (\vue \otimes \vue) : \Ds \bap } \dt
\nonumber
\end{align}
we may infer that condition \eqref{conv2}, together with \eqref{w4}, guarantee the convergence in the convective term 
\begin{equation} \label{conv3}
\int_0^T \intTd{ \vre (\vue \otimes \vue) : \Grad \bap } \dt \to 
\int_0^T \intTd{ \Ov{\vr \vu \otimes \vu} : \Grad \bfphi } \dt \ \mbox{as}\ \ep \to 0.
\end{equation} 	

Finally, in view of \eqref{w1}--\eqref{w3}, we get 
\[
\int_0^T \intTd{ \vre \vc{g} \cdot \bap } \dt \to \int_0^T \intTd{ \vr_F \vc{g} \cdot \bfphi } \ \mbox{as}\ \ep \to 0
\] 
assuming again only \eqref{conv2}. 

\subsection{Convergence of time derivatives} 

Our next goal is to perform the limit in the integral 
\begin{align}
&\int_0^T \intTd{ \vre \vue \cdot \partial_t \bap } \dt \br &=
\int_0^T \int_{ \cup_{n=1}^{N_\ep} B(r_\ep, \hne)}  \vre \vue \cdot \partial_t \bap \ \dx \dt + 
\int_0^T \int_{\Td \setminus \cup_{n=1}^{N_\ep} B(r_\ep, \hne)}  \vr_F \vue \cdot \partial_t \bap \ \dx \dt, 
\label{conv5}
\end{align}
where, in accordance with \eqref{CA7}, \eqref{w4}, and the bounds of rigid velocities \eqref{w5},                                                  
\[
\int_0^T \int_{\Td \setminus \cup_{n=1}^{N_\ep} B(r_\ep, \hne)}  \vr_F \vue \cdot \partial_t \bap \ \dx \dt \to 
\int_0^T \intTd{  \vr_F \vu \cdot \partial_t \bfphi } \dt \ \mbox{as}\ \ep \to 0 
\]
as long as
\begin{equation} \label{conv4}
N^2_\ep |\log(r_\ep)|^2 N^{\frac{5}{6}}_\ep r_\ep^{2} \to 0 \ \mbox{if}\ d = 3,\ 
N^2_\ep |\log(r_\ep)|^2 N^{\frac{1}{q}}_\ep r^{\frac{1}{q}}_\ep \to 0,\ q > 1 \ \mbox{arbitrary} \ \mbox{if}\ d = 2.
\end{equation}
In both cases, \eqref{conv4} follows from \eqref{conv2}. 

As for the first integral on the right--hand side of \eqref{conv5}, we use the bound \eqref{CA8} 
to obtain 
\begin{align}
&\left| \int_0^T \int_{\cup_{n=1}^{N_\ep} B(r_\ep, \hne)}  \vre \vue \cdot \partial_t \bap \ \dx \dt \right| \leq
\Ov{\vr}_\ep \| \vue \|_{L^2(0, T; L^6(\Td; \R^d))} \| \partial_t \bap \|_{L^2(0,T; L^{\frac{6}{5}}(\cup_{n=1}^{N_\ep} B(r_\ep, \hne)))} \br 
&\aleq \Ov{\vr}_\ep N^{\frac{11}{6}}_\ep |\log(r_\ep)| r^2_\ep \to 0,  
\label{conv6} 
\end{align}
for $d = 3$, and 
\begin{align}
	&\left| \int_0^T \int_{\cup_{n=1}^{N_\ep} B(r_\ep, \hne)}  \vre \vue \cdot \partial_t \bap \ \dx \dt \right| \leq
	\Ov{\vr}_\ep \| \vue \|_{L^2(0, T; L^{q'}(\Td; \R^d))} \| \partial_t \bap \|_{L^2(0,T; L^{q}(\cup_{n=1}^{N_\ep} B(r_\ep, \hne)))} \br 
	&\aleq \Ov{\vr}_\ep N_\ep^{\frac{1}{q} + 1} |\log(r_\ep)| r^{\frac{1}{q}}_\ep \to 0,\ q > 1 \ \mbox{arbitrary}  
	\label{conv6a} 
\end{align}
for $d = 2$. Note that both convergences yield \eqref{w2}.

\subsection{Strong convergence of velocities}

To complete the proof of Theorem \ref{Tm1}, we have to show strong convergence of the velocities $\vue$. 
To this end, we use a variant of Aubin-Lions argument. 

First, it follows from the momentum equation \eqref{m7}, and the estimates obtained in the previous part that 
\begin{equation} \label{sc1}
\left[ t \mapsto \intTd{ \vre \vue \cdot \bap (t, \cdot) } \right] \to 
\left[ t \mapsto \intTd{ \vr_F \vu \cdot \bfphi (t, \cdot) } \right] \  \mbox{in}\ C[0,T]  
\end{equation}	
for any (smooth) $\bfphi$, $\Div \bfphi = 0$.
In view of \eqref{w4}, this yields 
\begin{equation} \label{sc2}
\int_0^T \intTd{ \vre \vue \cdot \bfphi } \dt \to 
\int_0^T \intTd{ \vr_F \vu \cdot \bfphi } \dt \ \mbox{for any}\ \bfphi \in L^2(0,T; W^{1,2}(\Td; \R^d))\ 
\ \Div \bfphi = 0.
\end{equation}	
Moreover, writing a general $\bfphi$ as its Helmholtz decomposition and using the convergences \eqref{w4}, \eqref{w4}, we can extend the validity 
of \eqref{sc2} to arbitrary $\bfphi$ (not necessarily solenoidal). 

Combining the first convergence in \eqref{w4} with \eqref{sc2} we conclude 
\[
\vre \vue (t, \cdot)  \to \vr_F \vu (t, \cdot) \ \mbox{in}\ L^{\frac{2 \gamma}{\gamma + 1}}(\Td; \R^d)-{\rm weak}\ \mbox{for a.a.}\ t \in (0,T) 
\] 
modulo a suitable subsequence. Finally, since $L^{\frac{2 \gamma}{\gamma + 1}}(\Td) \hookrightarrow \hookrightarrow W^{-1,2}(\Td)$, we obtain
\[
\vre \vue (t, \cdot)  \to \vr_F \vu (t, \cdot) \ \mbox{in}\ W^{-1,2}(\Td; \R^d) \ \mbox{for a.a.}\ t \in (0,T), 
\]
yielding the desired conclusion
\begin{align} 
\lim_{\ep \to 0} \int_0^T \intTd{ \vre |\vue|^2 } \dt &= 
\lim_{\ep \to 0} \int_0^T \intTd{ \vre \vue \cdot \vue } \dt = \intTd{ 
	\vr_F |\vu|^2 } \dt \dt 
\label{sc3}	
\end{align}	
which, together with the weak convergence established in \eqref{w4}, yields the desired strong convergence in the $L^2-$norm claimed in Theorem \ref{Tm1}. 
In particular, $\Ov{\vr \vu \otimes \vu} = \vr_F \vu \otimes \vu$.
We have completed the proof of Theorem \ref{Tm1}. 

\section{Convergence to a strong solution - proof of Theorem \ref{Tm3}}
\label{T}

In accordance with the hypotheses of Theorem \ref{Tm3}, we suppose the limit Navier--Stokes system admits a classical (smooth) solution $\vu$. 
Our goal is to estimates the time increments of the relative energy, specifically, 
\begin{align}
\left[ \intTd{ \frac{1}{2} \vre |\vue - \vu^{\ep}_{\rm app}|^2 } \right]_{t = 0}^{t = \tau} &=  
\left[ \intTd{ \frac{1}{2} \vre |\vue|^2 } \right]_{t = 0}^{t = \tau} + \left[ \intTd{ \frac{1}{2} \vre |\vu^{\ep}_{\rm app}|^2 } \right]_{t = 0}^{t = \tau} \br 
- \left[ \intTd{  \vre \vue \cdot \vu^{\ep}_{\rm app} } \right]_{t = 0}^{t = \tau}. 
\label{T1}
\end{align}
This quantity may be evaluated using the energy inequality \eqref{m9}, together with the equation of continuity \eqref{m6} and the momentum equation 
\eqref{m7}. Indeed the quantities $|\vu_{\rm app}^\ep|^2$ and $\vu^\ep_{\rm app}$ are eligible test functions in 
the equation of continuity \eqref{m6} and the momentum balance \eqref{m7}, respectively.
Recall that $\vre \vue$ belong to the space \eqref{w3a} and as $\gamma>\frac{3}{2}$ implies $\frac{2\gamma}{\gamma+1}>\frac{6}{5}$, we have specifically 
\begin{align} 
(\vre \vue)_{\ep > 0} \ &\mbox{is bounded in}\ L^\infty(0,T; L^{\Gamma}(\Td; \R^d)),\ \Gamma > \frac{6}{5} \ \mbox{if}\ d = 3,\ \Gamma > 1 \ \mbox{if}\ d = 2, \br   \ (\vue)_{\ep > 0} 
\ &\mbox{bounded in}\ L^2(0,T; W^{1,2}(\Td; \R^d)).
 \label{T2}
\end{align}

After a straightforward manipulation, we get the relative energy inequality of the form
\begin{align} 
\frac{1}{2} &\intTd{ \vre |\vue - \vu^\ep_{\rm app}|^2 (\tau, \cdot) } + 
\int_0^\tau \intTd{ \Big( \mathbb{S}(\Ds \vue) -  \mathbb{S}(\Ds  	\vu^\ep_{\rm app}) \Big) : \Big( \Ds \vue - \Ds  	\vu^\ep_{\rm app} \Big) } \dt \br 
&\leq \frac{1}{2} \intTd{ \vr_{0, \ep} |\vu_{0, \ep} - \vu_{\rm app}(0, \cdot) |^2 } -
\int_0^\tau \intTd{ \vre \Big[ (\vue - \au) \otimes (\vue - \au) \Big] : \Grad \au } \dt \br 
&+ \int_0^\tau \intTd{ \vre (\partial_t \au + \au \cdot \Grad \au ) \cdot (\vue - \au) } \dt \br 
&+ \int_0^\tau \intTd{ \mathbb{S} (\Ds \au) : \Ds (\vue - \au) } \dt 
- \int_0^\tau \intTd{ \vre \vc{g} \cdot (\vue - \au) } \dt\ \mbox{for a.a.}\ \tau \in (0,T).
\label{T3}	
\end{align}
Here, it is important to notice that 
\begin{align}
\int_0^\tau &\intTd{ \vre \Big[ (\vue - \au) \otimes (\vue - \au) \Big] : \Grad \au } \dt \br &= 
\int_0^\tau \intTd{ \vr_F \Big[ (\vue - \au) \otimes (\vue - \au) \Big] : \Ds \au } \dt \br &=
\int_0^\tau \intTd{ \vr_F \Big[ (\vue - \au) \otimes (\vue - \au) \Big] : ( \Ds \au - \Ds \vu) } \dt \br &+
\int_0^\tau \intTd{ \vr_F \Big[ (\vue - \au) \otimes (\vue - \au) \Big] : \Ds \vu  } 
\label{T4}
\end{align}
Consequently, we may rewrite \eqref{T3} as 
\begin{align} 
	\frac{1}{2} &\intTd{ \vre |\vue - \vu^\ep_{\rm app}|^2 (\tau, \cdot) } + 
	\int_0^\tau \intTd{ \Big( \mathbb{S}(\Ds \vue) -  \mathbb{S}(\Ds  	\vu^\ep_{\rm app}) \Big) : \Big( \Ds \vue - \Ds  	\vu^\ep_{\rm app} \Big) } \dt \br 
	&\leq \frac{1}{2} \intTd{ \vr_{0, \ep} |\vu_{0, \ep} - \vu_{\rm app}^\ep(0, \cdot) |^2 } - 
	\int_0^\tau \intTd{ \vre \Big[ (\vue - \au) \otimes (\vue - \au) \Big] : \Ds \vu } \dt \br 
	&+ \int_0^\tau \intTd{ \vr_F (\partial_t \au + \au \cdot \Grad \au ) \cdot (\vue - \au) } \dt \br 
	&+ \int_0^\tau \intTd{ \mathbb{S} (\Ds \au) : \Ds (\vue - \au) } \dt 
	- \int_0^\tau \intTd{ \vre \vc{g} \cdot (\vue - \au) } \dt\br 
	&+ \int_0^\tau \intTd{ \vr_F \Big[ (\vue - \au) \otimes (\vue - \au) \Big] : ( \Ds \au - \Ds \vu) } \dt \br
	&+ \int_0^\tau \intTd{ (\vr_F - \vre) \Big[ (\vue - \au) \otimes (\vue - \au) \Big] : \Ds \vu  } \dt \br
	&+ \int_0^\tau \intTd{ (\vre - \vr_F) (\partial_t \au + \au \cdot \Grad \au ) \cdot (\vue - \au) } \dt \ 
	 \mbox{for a.a.}\ \tau \in (0,T).
	\label{T5}	
\end{align}

Finally, we use the fact that $\vu$ is a smooth solution to the Navier-Stokes system, 
\begin{align} 
	\frac{1}{2} &\intTd{ \vre |\vue - \vu^\ep_{\rm app}|^2 (\tau, \cdot) } + 
	\int_0^\tau \intTd{ \Big( \mathbb{S}(\Ds \vue) -  \mathbb{S}(\Ds  	\vu^\ep_{\rm app}) \Big) : \Big( \Ds \vue - \Ds  	\vu^\ep_{\rm app} \Big) } \dt \br 
	&\leq \frac{1}{2} \intTd{ \vr_{0, \ep} |\vu_{0, \ep} - \vu_{\rm app}^\ep(0, \cdot) |^2 } - 
	\int_0^\tau \intTd{ \vr_F \Big[ (\vue - \au) \otimes (\vue - \au) \Big] : \Ds \vu } \dt \br 
	&+ \int_0^\tau \intTd{ \vr_F \Big(\partial_t \au + \au \cdot \Grad \au - \partial_t \vu - \vu \cdot \Grad \vu \Big) \cdot (\vue - \au) } \dt \br 
	&+ \int_0^\tau \intTd{ \Big( \mathbb{S} (\Ds \au) - \mathbb{S}(\Ds \vu) \Big) : \Ds (\vue - \au) } \dt 
	\br 
	&+ \int_0^\tau \intTd{ \vr_F \Big[ (\vue - \au) \otimes (\vue - \au) \Big] : ( \Ds \au - \Ds \vu) } \dt \br
	&- \int_0^\tau \intTd{ (\vre - \vr_F) \vc{g} \cdot (\vue - \au) } \dt \br
    &+ \int_0^\tau \intTd{ (\vr_F - \vre) \Big[ (\vue - \au) \otimes (\vue - \au) \Big] : \Ds \vu  } \dt\br
	&+ \int_0^\tau \intTd{ (\vre - \vr_F) (\partial_t \au + \au \cdot \Grad \au ) \cdot (\vue - \au) } \dt \ 
	\mbox{for a.a.}\ \tau \in (0,T).
	\label{T6}	
\end{align}

We proceed in several steps:

\medskip 

\noindent{\bf Step 1.}

\begin{align} 
\int_0^\tau &\intTd{ \vr_F \Big(\partial_t \au - \partial_t \vu \Big) \cdot (\vue - \au) } \dt \br
&\leq \omega \int_0^\tau \| \vue - \au \|^2_{W^{1,2}(\Td; \R^d)} \dt + c(\omega) \int_0^\tau  \left\|\partial_t \au - \partial_t \vu  \right\|^2_{L^q(\Td; \R^d)}, 
\nonumber	
\end{align}
where $q = \frac{6}{5} \ \mbox{if}\ d = 3,\ q > 1 \ \mbox{if} \ d = 2$,
for any $\omega > 0$.
It follows from the bounds \eqref{CA7}, \eqref{w5} that  
\begin{align} 
\int_0^T \| \partial_t \au - \partial_t \vu \|^2_{L^{\frac{6}{5}}(\Tdd; R^3)} \dt &\aleq |\log(r_\ep)|^4 N^{\frac{17}{3}}_\ep r^{4}_\ep \ \mbox{if}\ d = 3, \br
\int_0^T \| \partial_t \au - \partial_t \vu \|^2_{L^{q}(\Tdd; R^3)} & \aleq |\log(r_\ep)|^4 N^{4 + \frac{2}{q}}_\ep r^{\frac{2}{q}}_\ep, \ q > 1,
\ \mbox{if}\ d = 2.  
\label{T7} 
\end{align}

\noindent{\bf Step 2.}

\begin{align} 
\int_0^\tau &\intTd{ \Big(\au \cdot \Grad \au - \vu \cdot \Grad \vu \Big) \cdot ( \vue - \au ) } \dt \br
&= -	\int_0^\tau \intTd{ \Big(\au \otimes \au - \vu \otimes \vu \Big) : \Grad ( \vue - \au ) } \dt \br 
&\leq \omega \int_0^\tau \intTd{ \| \Grad ( \vue - \au ) \|^2_{L^2(\Td; R^{d^2})} } \dt \br &+ 
	c(\omega) \left(
\int_0^\tau \intTd{ \| \vu \|_{L^4(\Td; \R^d)}^2 \| \au - \vu \|^2_{L^4(\Td; \R^d)}   } \dt \right),
\nonumber
\end{align}	
where, by virtue of \eqref{CA3}, 
\begin{equation} \label{T8}
\sup_{t \in [0,T]} \| \au - \vu \|_{L^4(\Td; \R^d)} \aleq N^{\frac{9}{4}}_\ep r^{1 + \frac{d}{4}}_\ep |\log(r_\ep)|^2.
\end{equation}
\noindent{\bf Step 3.}

\begin{align} 
&\left| \int_0^\tau \intTd{ \Big( \mathbb{S} (\Ds \au) - \mathbb{S}(\Ds \vu) \Big) : \Ds (\vue - \au) } \dt \right| \br 
&\leq \omega \int_0^\tau \|\Ds \vue - \Ds \au \|^2_{L^2(\Td; \R^d)} \dt + c(\omega)  
\int_0^\tau \| \Grad \au - \Grad \vu \|^2_{L^2(\Td; R^{d^2})} \dt, 
\nonumber
\end{align}	
where, by virtue of \eqref{CA5}
\begin{equation} \label{T9}
\sup_{t \in [0,T]}  \| \Grad \au - \Grad \vu \|_{L^2(\Td; R^{d^2})} \aleq N^{\frac{5}{2}}_\ep |\log(r_\ep)|^2 r^{\frac{d}{2}}_\ep.
\end{equation}

\noindent{\bf Step 4.}

\medskip
\noindent
Similarly to {\bf Step 2}, 
\begin{align}
\int_0^\tau &\intTd{ \vr_F \Big[ (\vue - \au) \otimes (\vue - \au) \Big] : ( \Ds \au - \Ds \vu) } \dt \br &\leq 
\omega \int_0^\tau \intTd{ \| \Grad ( \vue - \au ) \|^2_{L^2(\Td; R^{d^2})} } \dt + 
c(\omega) \int_0^\tau \intTd{ \| \au - \vu \|^4_{L^4(\Td; \R^d)}   } \dt.
\noindent
\end{align}

\medskip 

\noindent{\bf Step 5.}

\begin{align}
\int_0^\tau &\intTd{ (\vre-\vr_F) \Big[ (\vue - \au) \otimes (\vue - \au) \Big] : \Ds \vu  } \dt \br &= 
\int_0^\tau \int_{\cup_{n=1}^{N_\ep} B(r_\ep, \vc{h}_n)} (\vr^\ep_S - \vr_F) \Big[ (\vue - \au) \otimes (\vue - \au) \Big] : \Ds \vu \dx  \dt
\nonumber
\end{align}
Consequently, 
\begin{align}
&\left| \int_0^\tau \int_{\cup_{n=1}^{N_\ep} B(r_\ep, \vc{h}_n)} (\vr^\ep_S - \vr_F) \Big[ (\vue - \au) \otimes (\vue - \au) \Big] : \Ds \vu \dx  \dt	\right| \br
&\leq \|\vue - \au\|^2_{L^2(0,T; W^{1,2}(\Td))}|\Ov{\vr}_\ep||\cup_{n=1}^{N_\ep} B(r_\ep, \vc{h}_n)|^{\frac{1}{q}},
\nonumber
\end{align}
where $q = \frac{3}{2} \ \mbox{if}\ d = 3,\ q > 1 \ \mbox{if} \ d = 2$. Under the hypotheses of Theorem \ref{Tm3}, $|\Ov{\vr}_\ep|\cup_{n=1}^{N_\ep} B(r_\ep, \vc{h}_n)|^{\frac{1}{q}} \rightarrow 0$ as $\ep\rightarrow 0$.  
\medskip 

\noindent{\bf Step 6.}

\begin{align}
\int_0^\tau &\intTd{ (\vre - \vr_F) (\partial_t \au + \au \cdot \Grad \au ) \cdot (\vue - \au) } \dt \br &= 
\int_0^\tau \int_{\cup_{n=1}^{N_\ep} B(r_\ep, \vc{h}_n)} (\vr^\ep_S - \vr_F) \partial_t \au \cdot (\vue - \au) \dx \dt
\nonumber
\end{align}
Consequently, 
\begin{align}
&\left| \int_0^\tau \int_{\cup_{n=1}^{N_\ep} B(r_\ep, \vc{h}_n)} (\vr^\ep_S - \vr_F) \partial_t \au \cdot (\vue - \au) \dx \dt	\right| \br
&\leq \omega \int_0^T \| \vue - \au \|_{W^{1,2}(\Td; \R^d)}^2 \dt + 
c(\omega) \Ov{\vr}_\ep^2 \int_0^\tau \|  \partial_t \au  \|_{L^q( \cup_{n=1}^{N_\ep} B(r_\ep, \vc{h}_n) )}^2 \dt,\br q &= \frac{6}{5} \ \mbox{if}\ d = 3, 
\ q > 1 \mbox{ if}\ d = 2, 
\nonumber
\end{align}	
where by virtue of \eqref{CA8}, \eqref{w5},
\begin{align} 
\int_0^\tau \|  \partial_t \au  \|_{L^q( \cup_{n=1}^{N_\ep} B(r_\ep, \vc{h}_n) )}^2 \dt \aleq N_\ep^{\frac{11}{3}} r_\ep^{4}  |\log(r_\ep)|^2 \ \mbox{if}\ d = 3, \br 
\int_0^\tau \|  \partial_t \au  \|_{L^q( \cup_{n=1}^{N_\ep} B(r_\ep, \vc{h}_n) )}^2 \dt \aleq N_\ep^{2 + \frac{2}{q}} r_\ep^{\frac{2}{q}}  |\log(r_\ep)|^2 \ \mbox{if}\ d = 2.
\label{T10}
\end{align}	
The integral containing $\vc{g}$ can be handled in a similar manner. 

Finally, fixing $\omega > 0$ small enough and evoking the hypotheses of Theorem \ref{Tm3}, we may  
apply Gr\" onwall's type argument to the relative energy inequality \eqref{T6} to obtain
\begin{align}
\sup_{t \in [0,T]} & \intTd{ \vre |\vue - \vu^\ep_{\rm app}|^2 (\tau, \cdot) } + 
\int_0^T \intTd{ \Big( \mathbb{S}(\Ds \vue) -  \mathbb{S}(\Ds  	\vu^\ep_{\rm app}) \Big) : \Big( \Ds \vue - \Ds  	\vu^\ep_{\rm app} \Big) } \dt \br
&\aleq r_\ep^\alpha  \beta(\ep),\ \mbox{where} 
\ \beta(\ep) \to 0 \ \mbox{as}\ \ep \to 0,\ \alpha = 1 \ \mbox{if}\ d = 3,\ \alpha > 0 \ \mbox{if}\ d = 2.
\label{T11}
\end{align}
Next, similarly to \eqref{w5}, we deduce the estimates 
\begin{align} 
r_\ep^\alpha \left| \frac{ \D \hnet }{\dt} - \vu(t, X_{B, 4 r_\ep}(\hnet)  \right|^2
\aleq \intTd{ \Big( \mathbb{S}(\Ds \vue) -  \mathbb{S}(\Ds  	\vu^\ep_{\rm app}) \Big) : \Big( \Ds \vue - \Ds  	\vu^\ep_{\rm app} \Big) }
\nonumber
\end{align}	
for any $n=1, \dots, N$, where $\alpha$ is the same as in \eqref{T11}. Thus we conclude 
\begin{equation} \label{T12}
\int_0^T \left| \frac{ \D \hnet }{\dt} - \vu(t, X_{B, 4 r_\ep}(\hnet)  \right|^2	\dt \to 0 \ \mbox{as}\ \ep \to 0 
\ \mbox{uniformly for}\ n=1,\dots, N, 
\end{equation}	
which, together with \eqref{F5c}, completes the proof of Theorem \ref{Tm3}.

\def\cprime{$'$} \def\ocirc#1{\ifmmode\setbox0=\hbox{$#1$}\dimen0=\ht0
	\advance\dimen0 by1pt\rlap{\hbox to\wd0{\hss\raise\dimen0
			\hbox{\hskip.2em$\scriptscriptstyle\circ$}\hss}}#1\else {\accent"17 #1}\fi}


\end{document}